\documentclass[12pt]{amsart}
\usepackage{amscd,amssymb}
\usepackage[arrow,matrix]{xy}
\usepackage[plainpages,backref,urlcolor=blue]{hyperref}

\topmargin=0.1in
\textwidth5.9in
\textheight8.15in
\oddsidemargin=0.3in
\evensidemargin=0.3in

\theoremstyle{plain}
\newtheorem{thm}[subsection]{Theorem}
\newtheorem{lem}[subsection]{Lemma}
\newtheorem{prop}[subsection]{Proposition}
\newtheorem{cor}[subsection]{Corollary}

\theoremstyle{definition}
\newtheorem{rk}[subsection]{Remark}

\numberwithin{equation}{section}
\setcounter{tocdepth}{1}
\newcommand{\OO}{{\mathcal O}}

\newcommand{\F}{{\mathcal F}}

\newcommand{\wX}{\widehat{X}}
\newcommand{\wi}{\widehat{i}}
\newcommand{\wL}{\widehat{L}}
\newcommand{\wV}{\widehat{V}}
\newcommand{\wY}{\widehat{Y}}

\newcommand{\al}{{\alpha}}
\newcommand{\be}{{\beta}}

\newcommand{\Z}{\mathbb{Z}}
\newcommand{\Q}{\mathbb{Q}}
\newcommand{\R}{\mathbb{R}}
\newcommand{\C}{\mathbb{C}}

\newcommand{\bL}{\mathbb{L}}
\newcommand{\PP}{\mathbb{P}}
\newcommand{\HH}{\mathbb{H}}
\newcommand{\N}{\mathbb{N}}

\newcommand{\g}{\mathfrak {g}}

\DeclareMathOperator{\Hom}{Hom}

\DeclareMathOperator{\im}{im}

\DeclareMathOperator{\codim}{codim}

\DeclareMathOperator{\gr}{Gr}


\begin{document}

\title [On the fundamental groups of normal varieties]
{On the fundamental groups  of normal varieties }

\author[Donu Arapura]{Donu Arapura$^1$ }
\thanks{$^1$Partially supported by the NSF and at the IAS by the Ralph E. and
  Doris M. Hansmann fund.}
\address{ Department of Mathematics, Purdue University, West Lafayette, IN 47907, U.S.A.}
\email{dvb@math.purdue.edu}

\author[Alexandru Dimca]{Alexandru Dimca$^2$}
\address{Univ. Nice Sophia Antipolis, CNRS,  LJAD, UMR 7351, 06100 Nice, France. }
\email{dimca@unice.fr}

\thanks{$^2$ Partially supported by Institut Universitaire de France and IAS Princeton} 

\author[Richard Hain]{Richard Hain$^3$}
\address{Department of Mathematics, Duke University, Durham, NC 27708-0320, U.S.A.
 }
\email{hain@math.duke.edu }

\thanks{$^3$ Supported in part by grant DMS-1406420 from the National Science
Foundation and by the Friends of the Institute for Advanced Study.}

\subjclass[2010]{Primary 14F35 ; Secondary 32S35, 55N25.}

\keywords{normal variety, fundamental group, local system, twisted cohomology, characteristic variety}

\begin{abstract} We show that the fundamental groups of normal complex algebraic varieties share
 many properties of the fundamental groups of smooth varieties.
The jump loci of rank one local systems on a normal variety are related to the jump loci of a resolution and of a smoothing of this variety.
\end{abstract}

\maketitle

\section{Introduction } \label{s0}

By recent work of M. Kapovich, J. Koll\'ar and C. Simpson it is known that any finitely presented group is the fundamental group of an irreducible projective variety; moreover, the  singularities can be chosen to be of a specific type, see \cite{Ka}, \cite{KaKo}, \cite{Si}. On the other hand, a fundamental group $G$ of a smooth projective or quasi-projective variety satisfies a number of special properties.
For instance, if $G=\pi_1(X,x)$, with $X$ quasi-projective smooth and $x \in X$, then it was shown by J. Morgan that
$\g_{\C}$, the Malcev Lie algebra of $G$ over $\C$ has a natural mixed Hodge structure such that the associated graded Lie algebra $\gr^W_*\g_{\C}$ with respect to the weight filtration $W$, is isomorphic to ${\bL}/J$, where $\bL$ is a free 
Lie algebra with generators in degrees $-1$ and $-2$,  
and $J$ is a  Lie ideal, generated in degrees 
$-2$, $-3$ and $-4$, see \cite[Corollary 10.3]{Mo} and \cite[Theorem 5.8]{H3}.  For smooth quasi-projective varieties, the list of known restrictions is much longer, see for instance  \cite{ABC}, \cite{DPS},  \cite{DPSbull} and the references therein.

In this note, we consider the case of normal varieties. In a nutshell, we want to give some evidence that  {\it the fundamental groups of normal varieties behave like those of smooth varieties}; see also the remarks at the end of section 2.3 in the first chapter of \cite{ABC}.
In his 1974 ICM talk, P. Deligne stated in \cite{De}, section 10, that Morgan's theorem holds for $X$ a normal variety. 
We include a proof of Deligne's result. When combined with \cite{AN}, it puts strong restrictions on the solvable groups in the above class. 
We also bring additional support to the above feeling, 
by showing that all the specific properties of 
the fundamental groups of smooth varieties reflected by their resonance and characteristic varieties
as explained in \cite{DPS} extends to the larger class of normal varieties, see Corollary \ref{maincor}. 
Using this result, we show in Corollary \ref{corappl} that a right-angled Artin group
 is the fundamental group of a normal variety $X$ if and only if it is so for a smooth variety $X$.

\medskip

Here is a detailed description of our results.
For any finite connected CW complex $M$ we can define the first resonance varieties $R^1_k(M) \subset H^1(M,\C)$ as the set of $u \in H^1(M,\C)$ for which the corresponding Aomoto complex $H^*(M,\C), u\wedge)$ satisfies the condition 
$$\dim H^1(H^*(M,\C), u\wedge) \geq k.$$
Similarly, the first characteristic variety $\Sigma^1_k(M) =V^1_k(M) \subset H^1(M,\C^*)$
is the set of rank one local systems $L$ (given by a representation in $\Hom (\pi_1(M), \C^*)=H^1(M,\C^*)$) such that 
$$\dim H^1(M,L) \geq k.$$
 The {\it restricted characteristic variety} of $M$, denoted by $V^1_k(M)_1$, is the union of all irreducible components of the algebraic variety $V^1_k(M)$ which have  strictly positive dimension and pass through the unit element $1$. 
Recall that $R^1_k(M)$, $V^1_k(M)$ and $V^1_k(M)_1$ depend in fact only on the fundamental group $G=\pi_1(M)$ and hence may also be denoted by $R^1_k(G)$, $V^1_k(G)$ and $V^1_k(G)_1$. For simplicity, we set $R^1(M)=R^1_1(M)$, $V^1(M)=V^1_1(M)$ and $V^1(M)_1=V^1_1(M)_1$.

If we apply these constructions to an irreducible normal variety $X$ of dimension $n\geq 2$ and to a resolution of singularities $p:\wX \to X$, we get pairs of resonance varieties $R^1_k(X)$ and $R^1_k(\wX)$, and pairs of  characteristic varieties $V^1_k(X)$ and $V^1_k(\wX)$.
The first main result of this note is  the following generalization of a main theorem in
\cite{A}, and can be used to understand the relation among these varieties associated to $X$ and $\wX$.

\begin{thm} \label{thm2} 
 For any irreducible component $V$ of $V^1_k(X)$, with $ k>0$, having strictly positive dimension, there is a  surjective morphism $f:X\to C$ with  connected generic fibre to a smooth
  curve $C$ with negative Euler characteristic $E(C)\leq 0$ and a torsion character $\chi \in H^1(X,\C^*)$, such that $V$ is
 the translated affine torus $\chi f^*H^1(C,\C^*)$.  Moreover, $p^*(V)$ is an irreducible component of $V^1_k(\wX)$. Conversely, for an irreducible component $\wV$ of $V^1_k(\wX)$ satisfying the condition

\medskip

$(T)$     for any $ L \in \wV$ and $x \in X$   the restriction $ L|F_x $ is trivial,
where $F_x=p^{-1}(x)$, 

\medskip

\noindent there is an irreducible component $V$ of $V^1_k(X)$ such that
$p^*(V)=\wV.$
Finally, the zero dimensional components of $V^1_k(X)$  consist of
torsion characters.
\end{thm}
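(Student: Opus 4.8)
\smallskip

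\noindent\emph{Plan for the last assertion.} The idea is to pull everything back along the resolution $p\colon\wX\to X$ and use the known structure of characteristic varieties on the smooth variety $\wX$. Two properties of $p$ enter. First, since $X$ is normal, $p$ is proper with connected fibres, so $\pi_1(\wX)\surj\pi_1(X)$, and hence $p^*\colon H^1(X,\C^*)\to H^1(\wX,\C^*)$ is an injective homomorphism of commutative algebraic groups. Second, for any rank one local system $L$ on $X$ and any $x\in X$, the restriction of $p^*L$ to the fibre $F_x=p^{-1}(x)$ is trivial, because $p$ contracts $F_x$. From the first property and the Leray spectral sequence of $p$ (with $R^0p_*\C=\C$) one gets an injection $H^1(X,L)\hookrightarrow H^1(\wX,p^*L)$; in particular, if $\{\rho\}$ is a zero-dimensional component of $V^1_k(X)$, then $p^*\rho\in V^1_k(\wX)$.

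Let $\widehat{W}$ be the irreducible component of $V^1_k(\wX)$ through $p^*\rho$. If $\dim\widehat{W}=0$, then by the structure theorem for characteristic varieties of smooth varieties (\cite{A} and its refinements) $\widehat{W}=\{p^*\rho\}$ is a single torsion character, so $\rho$ is torsion since $p^*$ is injective. Thus I may assume $\dim\widehat{W}>0$, and then the same structure theorem writes $\widehat{W}=\tau\cdot\pi^*H^1(C,\C^*)$ for a surjective morphism $\pi\colon\wX\to C$ onto a smooth curve and a torsion character $\tau$; set $p^*\rho=\tau\cdot\pi^*\chi_0$.

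Next I would distinguish two cases according to the behaviour of $\pi$ on the fibres of $p$. Suppose first that $\pi|_{F_x}$ is constant for every $x\in X$. Then every $L=\tau\cdot\pi^*\chi\in\widehat{W}$ restricts on $F_x$ to $\tau|_{F_x}$, which equals $(p^*\rho)|_{F_x}$ and is therefore trivial by the second property above; hence $\widehat{W}$ satisfies condition $(T)$, so by the converse part of the theorem there is a component $V$ of $V^1_k(X)$ with $p^*V=\widehat{W}$. Since $p^*$ is injective and $\widehat{W}$ is positive dimensional, $V$ is positive dimensional and contains $\rho$, contradicting that $\{\rho\}$ is a component. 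Therefore some fibre $F_x$ is not contracted by $\pi$: choose an irreducible component $F'\subseteq F_x$ with $\dim\pi(F')>0$; as $F'$ is proper over a point, $\pi(F')=C$. By the second property, $(p^*\rho)|_{F'}$ is trivial, i.e.\ $(\pi|_{F'})^*\chi_0=(\tau|_{F'})^{-1}$ is a torsion character of $F'$. Since $\pi|_{F'}\colon F'\to C$ is surjective, the kernel of $(\pi|_{F'})^*\colon H^1(C,\C^*)\to H^1(F',\C^*)$ is finite (pass to the normalisation of $F'$ and use Stein factorisation), so $\chi_0$ is torsion; hence $p^*\rho=\tau\cdot\pi^*\chi_0$ is torsion, and so is $\rho$.

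The step I expect to be the real obstacle is the last case: ruling out a curve fibration on $\wX$ that fails to descend to $X$ yet still isolates a non-torsion character of $X$. What makes it work is that such a component $\widehat{W}$ cannot satisfy $(T)$ — by the already-established converse it would otherwise descend — so some exceptional fibre surjects onto $C$, and then the triviality of $p^*\rho$ on that fibre, together with the finiteness of $\ker(\pi|_{F'})^*$, forces the translating character $\chi_0$, hence $\rho$, to be torsion. The remaining inputs (the surjection $\pi_1(\wX)\surj\pi_1(X)$, the Leray injection, and the structure theorem on $\wX$) are standard.
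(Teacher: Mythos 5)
Your proposal addresses only the final clause of Theorem \ref{thm2} (zero-dimensional components of $V^1_k(X)$ are torsion). The first three assertions --- the existence of the pencils $f:X\to C$, the torsion of the translating character $\chi$, the fact that $p^*(V)$ is a component of $V^1_k(\wX)$, and the converse descent under condition $(T)$ --- are left unproved, and the converse is in fact invoked twice as a key input. Those parts carry most of the content of the theorem (the factorization of Arapura's map $g:\wX\to C$ through $X$ via the contraction of the exceptional fibres, the regularity of the induced map $h$ by normality, the identification of the torsion groups $T(g)\cong T(f)$ governing the translating characters, and the comparison of the singular supports $\Sigma(R^0h_*L)=\Sigma(R^0g_*L')$ needed to match the jumping orders $k$ on $X$ and $\wX$). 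As a proof of the stated theorem this is therefore a substantial gap, even though there is no circularity in using the converse to prove the last clause, since the paper's proof of the converse does not rely on it.

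For the clause you do treat, your argument appears sound granting the Simpson/Budur--Wang structure theorem for the \emph{smooth} quasi-projective variety $\wX$ (all components of $V^1_k(\wX)$ are torsion-translated subtori, the positive-dimensional ones pulled back from curves), and it is genuinely different from the paper's. The paper cannot simply transport the problem to $\wX$, because it is not known whether an isolated point of $V^1_k(X)$ stays isolated in $V^1_k(\wX)$ (Remark \ref{rktorsion}); it therefore re-runs Simpson's Betti/de Rham comparison and the Gelfond--Schneider argument for the modified jump condition $\ker[H^1(\wX,L)\to H^1(E,L)]\neq 0$, which cuts out $V^1(X)$ inside $M_B(\wX)$ and is defined over $\bar\Q$. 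Your dichotomy sidesteps the open question: if the Arapura pencil $\pi$ of the component $\widehat W\ni p^*\rho$ contracts every exceptional fibre, then $\widehat W$ satisfies $(T)$ and descends, contradicting isolation of $\rho$; if some fibre component $F'$ dominates $C$ (forcing $C$ projective, as $F'$ is compact), the triviality of $(p^*\rho)|_{F'}$ together with the finiteness of $\ker\bigl((\pi|_{F'})^*:H^1(C,\C^*)\to H^1(F',\C^*)\bigr)$ --- the same injectivity-on-$H^1$ argument as in Lemma \ref{lemaux} --- forces $\chi_0$, hence $\rho$, to be torsion. This buys a softer deduction of the normal case from the smooth case, at the price of quoting the full quasi-projective structure theorem for $\wX$ as a black box; the paper's route is self-contained relative to Simpson's arithmetic method and also yields the $\bar\Q$-rationality statements it needs. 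If you intend this as a complete proof, you must still supply the first three assertions, and you should state explicitly that the converse is only being applied to positive-dimensional $\wV$ (for isolated $\wV$ the descent is exactly the open question of Remark \ref{rktorsion}).
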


The second main result of this note is  the following property, informally called Morgan's obstruction and announced at the beginning of this Introduction.

\begin{thm} \label{thm3} 
The graded Lie algebra $\gr^W_*\g$ of the (complex) Malcev Lie algebra $\g=\g(X,x)$ associated to $\pi_1(X,x)$ for a normal irreducible variety $X$ admits a presentation with generators of degree $-1$ and $-2$ and relations of degree $-2,-3$ and $-4$. If $X$ is in addition projective, then the generators can be chosen only of degree $-1$ and the relations only of degree $-2$.

\end{thm}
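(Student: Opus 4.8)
\emph{Proof proposal.} The plan is to deduce Theorem~\ref{thm3} from the formalism of mixed Hodge structures on Malcev Lie algebras, reducing it to two weight estimates, only one of which uses normality.

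Recall that for any complex algebraic variety $X$ the pro-nilpotent Lie algebra $\g=\g(X,x)$ carries a functorial mixed Hodge structure (Morgan \cite{Mo} and Hain \cite{H3} in the smooth case; in general this rests on Navarro Aznar's extension of Deligne's Hodge theory, and is the substance of Deligne's assertion in \cite{De}, \S10): the weight filtration $W_\bullet$ is by Lie ideals, so $\gr^W_*\g=\bigoplus_{k\ge 1}\gr^W_{-k}\g$ is a graded Lie algebra with $[\gr^W_{-i}\g,\gr^W_{-j}\g]\subseteq\gr^W_{-i-j}\g$; the isomorphism $\g^{ab}\cong H_1(X;\C)$ and the inflation inclusion $H^2(\g;\C)\hookrightarrow H^2(X;\C)$ are morphisms of mixed Hodge structure; and the Chevalley--Eilenberg differential is a morphism of mixed Hodge structure, hence strict for $W$, so that $\gr^W_* H_*(\g)=H_*(\gr^W_*\g)$. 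Choose a minimal presentation $\gr^W_*\g=\bL/J$ with $\bL$ the free graded Lie algebra on a graded complement of $[\gr^W_*\g,\gr^W_*\g]$, equivalently on $\gr^W_* H_1(X;\C)$. By the Lie-algebra analogue of Hopf's formula the minimal relations form $J/[\bL,J]\cong H_2(\gr^W_*\g)=\gr^W_* H_2(\g)$, which is dual, with degrees reversed, to $\gr^W_* H^2(\g;\C)$. Thus the degrees of the generators are the weights of $H_1(X;\C)$, and the degrees of the relations are the negatives of the weights of $H^2(\g;\C)$. Recalling that $H^1$ of any variety has weights $\le 2$, it therefore suffices to prove: (a) $W_0 H^1(X;\C)=0$, so $H_1(X;\C)$ has weights in $\{-1,-2\}$ (only $-1$ if $X$ is projective); and (b) $H^2(\g;\C)$ has weights in $\{2,3,4\}$ (only $2$ if $X$ is projective).

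For (a), let $p\colon\wX\to X$ be a resolution of singularities. Since $X$ is normal, $p$ is proper with connected fibres, i.e. $R^0p_*\C_{\wX}=\C_X$, so the Leray spectral sequence yields an injection $p^*\colon H^1(X;\C)\hookrightarrow H^1(\wX;\C)$; as $p^*$ is also a morphism of mixed Hodge structure, $H^1(X;\C)$ is a sub-Hodge structure of $H^1(\wX;\C)$, whose weights are $1$ and $2$ (only $1$ when $\wX$, hence $X$, is projective). Hence $W_0 H^1(X;\C)=0$. For (b) the lower bound is now automatic: generators only in degrees $-1$ and $-2$ make $\bL\to\gr^W_*\g$ an isomorphism in degree $-1$ and zero in nonnegative degrees, so $J$, and with it $J/[\bL,J]\cong\gr^W_*H_2(\g)$, vanishes in degrees $\ge -1$; dually $W_1 H^2(\g;\C)=0$. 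The upper bound comes from $H^2(\g;\C)\hookrightarrow H^2(X;\C)$, since by Deligne the weights of $H^2$ of any complex algebraic variety are at most $4$, and at most $2$ when the variety is proper, so the same holds for the sub-Hodge structure $H^2(\g;\C)$. This proves (b), and assembling the minimal presentation gives the theorem; in the projective case (a) and (b) force generators of degree $-1$ and relations of degree $-2$ only.

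The only genuinely geometric ingredient is (a), the vanishing $W_0 H^1(X;\C)=0$, which is precisely where normality enters, through the connectedness of the fibres of a resolution. The delicate point — and the one I expect to cost the most work — is the input assumed at the start: the functorial mixed Hodge structure on $\g$ and on $H_*(\g)$, with the compatibilities listed (in particular that the inflation map $H^2(\g;\C)\to H^2(X;\C)$ is a morphism of mixed Hodge structure), must be available for a \emph{singular} $X$. This means running Morgan's and Hain's constructions over Navarro Aznar's mixed Hodge diagrams, obtained by cohomological descent from a smooth simplicial resolution of $X$, and verifying that strictness of the Chevalley--Eilenberg differential for $W$ — which is what identifies $\gr^W_* H_*(\g)$ with $H_*(\gr^W_*\g)$ — persists in that setting.
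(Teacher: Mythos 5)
Your proposal is correct and follows essentially the same route as the paper: Hain's mixed Hodge structure on the Malcev Lie algebra $\g$, the compatibility of $H^{\le 2}(\g)\to H^{\le 2}(X)$ with that structure, the weight restriction on $H^1(X)$ coming from normality (injectivity of $p^*$ for a resolution with connected fibres), and Deligne's weight bounds in degree two. The only cosmetic difference is that you read off the relation degrees from $\gr^W_* H_2(\g)$ via the Hopf formula and the injection $H^2(\g)\hookrightarrow H^2(X)$, whereas the paper invokes Hain's Corollary 5.7 to present $\gr^W_*\g$ with relations given by the image of $\gr^W_* H_2(X,\C)$ — dual formulations of the same weight count.
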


\begin{cor} \label{maincor2} 
The fundamental group $G=\pi_1(X)$ of a normal projective variety is 1-formal.
\end{cor}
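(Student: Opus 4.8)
The plan is to deduce the Corollary from the projective case of Theorem~\ref{thm3} together with the standard Lie‑algebraic criterion for $1$-formality. Recall (Sullivan; see \cite{Mo,DPS} and the references therein) that a finitely generated group $G$ is $1$-formal if and only if its complex Malcev Lie algebra $\g=\g(G)$ is isomorphic, as a filtered Lie algebra, to the degree‑completion of a quadratically presented graded Lie algebra; equivalently, $\g$ must be filtered‑isomorphic to the completion $\widehat{\gr_*\g}$ of its own associated graded with respect to the lower central series, and $\gr_*\g$ must admit a presentation with generators in degree $-1$ and relations in degree $-2$ (in the grading convention of Theorem~\ref{thm3}). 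So it suffices to verify these two conditions for $G=\pi_1(X)$ when $X$ is normal and projective.

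First I would record that $H^1(X,\Q)$ is pure of weight $1$ for $X$ normal projective. Indeed, for a resolution $p\colon\wX\to X$ the fibres of $p$ are connected, since $X$ is normal; hence $p$ induces a surjection $\pi_1(\wX)\surj\pi_1(X)$, and therefore an injection $H^1(X,\Q)\hookrightarrow H^1(\wX,\Q)$ of mixed Hodge structures. As $\wX$ is smooth projective, $H^1(\wX,\Q)$ is pure of weight $1$, and a sub‑mixed Hodge structure of a pure Hodge structure is pure. Granting this, the mixed Hodge structure on $\g=\g(X,x)$ produced by the proof of Theorem~\ref{thm3} has its weight filtration $W$ equal, up to the usual reindexing, to the lower central series filtration: the generators of $\gr^W_*\g$ sit in degree $-1$ by Theorem~\ref{thm3}, and by purity of $H^1$ they have weight $-1$, so $\g$ has weights $\le -1$ and the Morgan comparison gives $\gr^W_*\g\cong\gr_*\g$. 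Since by the projective case of Theorem~\ref{thm3} this graded Lie algebra has generators in degree $-1$ and relations in degree $-2$, the second of the two conditions holds.

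For the first condition I would use the mixed Hodge structure on $\g$ once more. As $W$ now coincides with the lower central series filtration, one can choose a splitting of $W$ on each nilpotent quotient of $\g$ that is compatible with the Lie bracket — possible because morphisms of mixed Hodge structures are strict, so the splitting can be built inductively up the lower central series; this is precisely the formality mechanism of \cite{Mo}. Passing to the limit yields a filtered Lie algebra isomorphism $\g\cong\widehat{\gr^W_*\g}$. With both conditions verified, the criterion above shows that $\pi_1(X)$ is $1$-formal.

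The only genuinely nontrivial ingredient is the mixed Hodge structure on $\g(X,x)$ with weight filtration equal to the lower central series filtration; this is exactly what the proof of Theorem~\ref{thm3} — Deligne's extension of Morgan's theorem to normal varieties — supplies, and once it is in hand the deduction of the Corollary is formal. The remaining points are bookkeeping: all the Lie algebras here are pro‑nilpotent, so ``quadratically presented'' and ``completion'' are to be read in the completed sense, and one must keep consistent the sign and reindexing conventions relating homological degree to weight.
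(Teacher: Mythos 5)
Your argument is correct and takes essentially the same route as the paper: purity of $H^1(X,\Q)$ via the resolution, the mixed Hodge structure on the Malcev Lie algebra $\g$, the quadratic presentation of $\gr^W_*\g_{\C}$ from the projective case of Theorem~\ref{thm3}, and the criterion of Proposition 3.20 in \cite{ABC}. The only cosmetic difference is that the paper gets the filtered isomorphism $\g_{\C}\cong\prod_{j\ge 1}\gr^W_{-j}\g_{\C}$ directly from Proposition 5.2 in \cite{H3} (the canonical splitting of a Hodge Lie algebra with negative weights), rather than from your inductive strictness-based splitting, which is the same mechanism in different packaging.
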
 

If $\Gamma$ is a finitely nilpotent group, we will say that it is quadratically presented if its Malcev Lie algebra has generators and relations in degree $-1$ and $-2$ respectively. The class of such groups is strongly restricted, see  \cite{CT}.
By the theorem  \ref{thm3} together with \cite[thm 3.3]{AN}, we deduce.

\begin{cor} \label{maincor3} 
If $X$ is normal and projective such that $\pi_1(X)$ is a solvable subgroup of $GL_n(\Q)$, then $\pi_1(X)$ contains a quadratically presented nilpotent group of finite index.
\end{cor}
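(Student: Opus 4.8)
The plan is to exhibit inside $\pi_1(X)$ a finite-index subgroup $N$ that is at the same time finitely generated nilpotent and $1$-formal, and then to observe that any such $N$ is quadratically presented in the sense defined above. The first step is where the hypotheses on $X$ and on $\pi_1(X)$ are used; the second is a short computation with Malcev Lie algebras.

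First I would invoke \cite[Thm.~3.3]{AN}: since $\pi_1(X)$ is a solvable subgroup of $GL_n(\Q)$ occurring as the fundamental group of the normal projective variety $X$, it is virtually nilpotent, hence contains a finitely generated nilpotent subgroup $N$ of finite index. In order to reuse the results of this note I would then realize $N$ geometrically. The subgroup $N$ corresponds to a connected finite topological covering of $X$; since $X$ is a normal complex algebraic variety, this covering is the analytic space underlying a finite \'etale morphism $p':X'\to X$ of complex algebraic varieties (Grauert--Remmert, i.e.\ the Riemann existence theorem for normal varieties). Being finite over a projective variety, $X'$ is projective; being \'etale over a normal variety, it is normal; and it is irreducible because the covering is connected. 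Thus $X'$ is an irreducible normal projective variety with $\pi_1(X')\cong N$.

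Now I would apply Theorem \ref{thm3} and Corollary \ref{maincor2} to $X'$. By Corollary \ref{maincor2} the group $N=\pi_1(X')$ is $1$-formal, that is, its complex Malcev Lie algebra $\g(N)$ is isomorphic, as a complete filtered Lie algebra, to the degree completion $\widehat{Q}$ of a quadratically presented graded Lie algebra $Q$; by Theorem \ref{thm3} one may take $Q=\gr^W_*\g(N)$, which has generators in degree $-1$ and relations in degree $-2$. On the other hand, since $N$ is finitely generated nilpotent, $\g(N)$ is finite dimensional, with dimension equal to the Hirsch length of $N$. Hence $\widehat{Q}$ is finite dimensional, so $Q$ has only finitely many nonzero graded pieces and $Q=\widehat{Q}$. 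Therefore $\g(N)\cong Q$ is quadratically presented, i.e.\ $N$ is a quadratically presented nilpotent subgroup of finite index in $\pi_1(X)$, as desired.

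The Malcev Lie algebra argument in the last paragraph is routine once $1$-formality is in hand, and the group theory is immediate; the step that needs genuine care is the geometric one --- checking that the covering of $X$ attached to $N$ is again (the analytification of) a normal projective variety --- since it is precisely this that allows Theorem \ref{thm3} and Corollary \ref{maincor2} to be applied a second time. One should also take care to quote \cite[Thm.~3.3]{AN} in the form valid for fundamental groups of normal projective varieties, not only for smooth ones.
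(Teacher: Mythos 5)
Your proof is correct and follows exactly the route the paper intends: the paper deduces the corollary in one line from \cite[Thm.~3.3]{AN} (virtual nilpotence) together with Theorem \ref{thm3}, and your write-up simply makes explicit the implicit steps (realizing the finite-index nilpotent subgroup as $\pi_1$ of a finite \'etale cover, which is again normal projective, and using finite-dimensionality of the Malcev Lie algebra of a finitely generated nilpotent group to pass from the graded statement to a quadratic presentation).
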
 

The next result follows by using Theorems \ref{thm2} and   \ref{thm3} and applying Theorem C in \cite{DPS} to $\wX$. We recall that a linear subspace $E \subset H^1(X,\C)$ is said to be $p$-isotropic if the cup-product restriction morphism $E\otimes E \to H^2(X,\C)$ has a $p$-dimensional image and, for $p=1$, induces a pairing on $E$, see \cite{DPS}.

\begin{cor} \label{maincor} 
Let $X$ be an irreducible normal variety. Set $G=\pi_1(X)$ and let $V^{\al}$ be the irreducible components in $V^1_k(X)_1=V^1_k(G)_1$ for some $k>0$. Denote by $T^{\al}$ the tangent space at 1 to $V^{\al}$. Then the following hold.

\begin{enumerate} 
\item Any tangent space $T^{\al}$ is a $p$-isotropic linear subspace of $H^1(X,\C)=H^1(G,\C)$, defined over $\Q$ and of dimension at least $2p+2$, for some $p=p(\al) \in \{0,1\}$.

\item If $\al \ne \be$, then $T^{\al} \cap T^{\be}=0$.

\item Suppose $G$ is a 1-formal group (e.g.  $X$ is in addition projective) and let $R^{\al}$ be the irreducible components of the resonance variety $R^1_k(X)=R^1_k(G)$. Then the  collection $T^{\al}$ coincides with the collection $R^{\al}$. In other words, in this case $R^1_k(X)$ is the tangent cone to $V^1_k(X)$ at the unit element 1.

\end{enumerate}

\end{cor}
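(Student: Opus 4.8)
The plan is to reduce all three assertions to the smooth case: Theorem \ref{thm2} transports the components of $V^1_k(X)$ to and from those of $V^1_k(\wX)$ for a resolution $p\colon\wX\to X$, Theorem C of \cite{DPS} supplies the structure of the latter, and Theorem \ref{thm3} (via Corollary \ref{maincor2}) provides the $1$-formality needed for part (3). Setup: since $X$ is normal, $\pi_1(\wX)\twoheadrightarrow\pi_1(X)$, so $p^*\colon H^1(X,\C)\hookrightarrow H^1(\wX,\C)$ is an injection induced by an integral linear map, and likewise $p^*\colon H^1(X,\C^*)\hookrightarrow H^1(\wX,\C^*)$ is injective. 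Let $V^{\al}$ be an irreducible component of $V^1_k(X)_1$ with $k>0$. By Theorem \ref{thm2} it equals $\chi f_{\al}^*H^1(C_{\al},\C^*)$ with $\chi$ a torsion character and $f_{\al}\colon X\to C_{\al}$ a surjective morphism with connected generic fibre onto a smooth curve with $E(C_{\al})\le 0$; since $1\in V^{\al}$ we get $\chi\in f_{\al}^*H^1(C_{\al},\C^*)$, hence $V^{\al}=f_{\al}^*H^1(C_{\al},\C^*)$, $T^{\al}=f_{\al}^*H^1(C_{\al},\C)$ (defined over $\Q$, since $f_{\al}$ is a morphism), and $f_{\al}^*\colon H^1(C_{\al},\C)\xrightarrow{\ \sim\ }T^{\al}$. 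Moreover, again by Theorem \ref{thm2}, $p^*(V^{\al})=(f_{\al}\circ p)^*H^1(C_{\al},\C^*)$ is an irreducible component of $V^1_k(\wX)$, which passes through $1$ and is positive-dimensional, hence a component of $V^1_k(\wX)_1$ with tangent space $p^*(T^{\al})$; thus the pencil it defines on $\wX$ is $f_{\al}\circ p$, with the same base curve $C_{\al}$.

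For part (1), apply Theorem C of \cite{DPS} to the smooth variety $\wX$: the tangent space $p^*(T^{\al})$ is a $p'$-isotropic $\Q$-subspace of $H^1(\wX,\C)$ of dimension $\ge 2p'+2$, for a well-defined $p'\in\{0,1\}$. Since $p^*$ is injective and defined over $\Q$, it follows that $T^{\al}$ is a $\Q$-subspace of $H^1(X,\C)$ of dimension $\dim p^*(T^{\al})\ge 2p'+2$. For the isotropy, note that by naturality of cup products the image $W^{\al}$ of $T^{\al}\otimes T^{\al}\to H^2(X,\C)$ is contained in $f_{\al}^*H^2(C_{\al},\C)$, of dimension $\le 1$, and $p^*(W^{\al})$ is the analogous image for $\wX$, of dimension $p'$. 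Set $p(\al):=\dim W^{\al}\in\{0,1\}$. If $C_{\al}$ is not compact then $H^2(C_{\al},\C)=0$, so $p(\al)=0$, and (a Leray spectral sequence computation of $H^1(X,f_{\al}^*L')$ for generic rank one $L'$, using that $f_{\al}$ has connected fibres — equivalently the refined form of Arapura's theorem) rules out $C_{\al}=\C^*$, so $\dim T^{\al}=b_1(C_{\al})\ge 2=2p(\al)+2$. If $C_{\al}$ is compact then the same argument excludes genus $\le 1$, so $g(C_{\al})\ge 2$ and $\dim T^{\al}=2g\ge 4\ge 2p(\al)+2$ for either value of $p(\al)$; and when $p(\al)=1$, the isomorphism $f_{\al}^*$ carries the non-degenerate intersection pairing on $H^1(C_{\al},\C)$ to the cup-product form on $T^{\al}$, so the latter is a pairing. (In all cases $p(\al)=p'$, consistently with Theorem C on $\wX$.)

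For part (2): if $\al\ne\be$ then $V^{\al}\ne V^{\be}$, hence $p^*(V^{\al})\ne p^*(V^{\be})$ by injectivity of $p^*$ on the character torus, and these are distinct components of $V^1_k(\wX)_1$ by Theorem \ref{thm2}, so Theorem C of \cite{DPS} applied to $\wX$ gives $p^*(T^{\al})\cap p^*(T^{\be})=0$; injectivity of $p^*$ on $H^1$ then yields $T^{\al}\cap T^{\be}=0$. For part (3), assume $G=\pi_1(X)$ is $1$-formal, which holds when $X$ is projective by Corollary \ref{maincor2}. By the Tangent Cone Theorem of \cite{DPS}, $R^1_k(G)$ equals the tangent cone of $V^1_k(G)=V^1_k(X)$ at $1$. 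By Theorem \ref{thm2} the positive-dimensional components of $V^1_k(X)$ through $1$ are exactly the subtori $V^{\al}$, each smooth at $1$ with tangent space $T^{\al}$, while the remaining components are either zero-dimensional (torsion characters) or positive-dimensional but miss $1$; hence the tangent cone at $1$ is $\bigcup_{\al}T^{\al}$. By (1) each $T^{\al}$ is a linear subspace of dimension $\ge 2$, and by (2) none is contained in another, so the irreducible components of $\bigcup_{\al}T^{\al}=R^1_k(G)$ are precisely the $T^{\al}$; that is, the collections $\{T^{\al}\}$ and $\{R^{\al}\}$ coincide and $R^1_k(X)$ is the tangent cone to $V^1_k(X)$ at $1$.

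The main obstacle is part (1): one must control the isotropy index of $T^{\al}$ inside $H^2(X,\C)$, not merely inside $H^2(\wX,\C)$, and $p^*\colon H^2(X,\C)\to H^2(\wX,\C)$ need not be injective for a resolution of a normal variety. This is circumvented by the fact that, by Theorem \ref{thm2}, the relevant cup-product image is pulled back from the curve $C_{\al}$, hence automatically at most one-dimensional, while the lower bound $\dim T^{\al}\ge 2p(\al)+2$ is forced by the Euler characteristic (genus) of $C_{\al}$ — which is large enough precisely because $V^{\al}$ genuinely lies in $V^1_k(X)$ with $k>0$. Once this is settled, all the substantive input — the pencil structure of the characteristic varieties, the transversality of distinct components, and the tangent-cone description of resonance for $1$-formal groups — is imported from Theorems \ref{thm2} and \ref{thm3} and from the results of \cite{DPS} applied to the smooth variety $\wX$, via the injection $p^*$.
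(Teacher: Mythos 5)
Your proposal is correct and takes essentially the same route as the paper, whose proof of this corollary is just the observation that it follows from Theorems \ref{thm2} and \ref{thm3} together with Theorem C of \cite{DPS} applied to $\wX$. Your additional step of deriving the isotropy index and the dimension bound directly from the curve $C_{\al}$ given by Theorem \ref{thm2} (rather than transporting them through $p^*$ on $H^2$, which need not be injective) supplies a detail the paper leaves implicit, but does not change the approach.
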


As an application, we get the following analog of Theorem 11.7 in \cite{DPS} (with exactly the same proof), telling us which right-angled Artin groups are fundamental groups of normal varities. Recall that all right-angled Artin groups are 1-formal and hence they all pass Morgan's obstruction recalled above.

\begin{cor} \label{corappl} 
Let $\Gamma=(V,E)$ be a finite simplicial graph, with associated right-angled Artin group $G_{\Gamma}$. The following are equivalent.
\begin{enumerate} 
\item There is a connected smooth algebraic variety $X$ such that $\pi_1(X)=G_{\Gamma}$.

\item There is a normal (or unibranch) algebraic variety $X$ such that $\pi_1(X)=G_{\Gamma}$.

\item The graph $\Gamma$ is a complete multipartite graph.

\item The group $G_{\Gamma}$ is a finite product of finitely generated free groups.

\end{enumerate} 
If $X$ is normal and projective, then $\pi_1(X)=G_{\Gamma}$ is a free abelian group of even rank.

In particular, there are infinitely many right-angled Artin groups which are not isomorphic to fundamental groups of normal (or unibranch)  varieties.

\end{cor}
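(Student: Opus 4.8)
The plan is to follow the strategy of \cite[Theorem 11.7]{DPS} essentially verbatim, using the results of this note to reduce the normal case to the smooth case. The implications $(1)\Rightarrow(2)$ is trivial (a smooth variety is normal), and the equivalences $(1)\Leftrightarrow(3)\Leftrightarrow(4)$ are exactly the content of \cite[Theorem 11.7]{DPS}, so the only real work is $(2)\Rightarrow(1)$ — or more precisely, $(2)\Rightarrow(4)$, since $(4)\Rightarrow(1)$ is classical (a product of curves realizes a product of free groups). So I would assume $X$ is normal with $\pi_1(X)=G_\Gamma$ and show $\Gamma$ is complete multipartite, equivalently that its complement graph contains no triangle and no edge-free path on three vertices (i.e. $\overline{\Gamma}$ is a disjoint union of complete graphs).

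First I would recall the two structural facts about $G_\Gamma$ that drive the argument in \cite{DPS}: the cohomology ring $H^*(G_\Gamma,\C)$ is the exterior Stanley--Reisner algebra of $\Gamma$, and the resonance variety $R^1_1(G_\Gamma)$ is a union of coordinate linear subspaces, one for each maximal complete multipartite subgraph; moreover $G_\Gamma$ is $1$-formal, so by Corollary \ref{maincor}(3) (applicable since right-angled Artin groups are $1$-formal) the resonance variety coincides with the tangent cone to $V^1_1(G_\Gamma)$ at $1$, and its irreducible components $T^\al$ satisfy the isotropicity and dimension constraints of Corollary \ref{maincor}(1)--(2). In particular each $T^\al$ is $p$-isotropic with $p\in\{0,1\}$ and has dimension $\ge 2p+2$, and distinct components meet only at the origin. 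Then I would run the combinatorial analysis of \cite{DPS}: a maximal complete multipartite subgraph of $\Gamma$ which is not itself a complete graph forces a $p=1$ component, hence a $2$-dimensional isotropic piece inside it, forcing that subgraph to be exactly a complete bipartite graph $K_{1,1}$-type (a "square" $K_{2,2}$ pattern is the obstruction), while the $p=0$ components correspond to the complete subgraphs; the transversality in part (2) rules out the two forbidden induced subgraphs on the complement, yielding that $\overline\Gamma$ is a disjoint union of cliques, i.e. $\Gamma$ is complete multipartite.

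For the projective addendum, I would invoke Theorem \ref{thm3} and Corollary \ref{maincor}: if $X$ is moreover normal projective then $\pi_1(X)$ is $1$-formal with quadratically presented Malcev Lie algebra, and the resonance components $T^\al$ of $G_\Gamma$ are not only isotropic but — by the projective strengthening (relations only in degree $-2$, plus the hard Lefschetz-type symmetry of the weight filtration on $H^1$) — all of type $p=1$ with an induced nondegenerate alternating pairing, which forbids any $p=0$ component; combined with the combinatorial dictionary this forces $\Gamma$ to have no edges in its complement beyond those assembling disjoint $K_2$'s, i.e. $\overline\Gamma$ is a perfect matching, so $\Gamma=K_{2,2,\dots,2}$ and $G_\Gamma=(\Z^2)^m$ is free abelian of even rank $2m$. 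The final sentence is then immediate: there are infinitely many graphs $\Gamma$ (e.g. any tree on $\ge 4$ vertices, or the $n$-cycle for $n\ge 5$) that are not complete multipartite, so by the equivalence $(2)\Leftrightarrow(3)$ their right-angled Artin groups are not fundamental groups of normal (or unibranch) varieties.

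The main obstacle is purely bookkeeping rather than conceptual: one must verify that every ingredient of the \cite{DPS} proof that was stated for \emph{smooth} varieties has been upgraded here to the normal (or unibranch) setting. Concretely, this means checking that Corollary \ref{maincor} really does give all of parts (1)--(3) for normal $X$ — which it does, being built from Theorems \ref{thm2} and \ref{thm3} — and, for the unibranch case, noting that a unibranch variety has the same $\pi_1$ as its normalization, so the normal case applies. Once this transfer is in place the combinatorics is identical to \cite{DPS} and there is genuinely "exactly the same proof" as claimed.
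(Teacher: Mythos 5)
Your overall route---reduce everything to Corollary \ref{maincor} (using $1$-formality of right-angled Artin groups) and then quote the combinatorial analysis of \cite[Theorem 11.7]{DPS} verbatim---is exactly what the paper intends by ``with exactly the same proof'', and for the equivalence of (1)--(4) this is fine. (One small slip: the complement of a complete multipartite graph can certainly contain triangles; the correct forbidden pattern is an induced path on three vertices in $\overline{\Gamma}$, equivalently an induced edge-plus-isolated-vertex in $\Gamma$. Your parenthetical ``$\overline{\Gamma}$ is a disjoint union of complete graphs'' is the right criterion.)

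Your argument for the projective addendum, however, has a genuine error. Ruling out $p=0$ components does not give ``$\overline{\Gamma}$ is a perfect matching'': for $\Gamma$ complete multipartite, the components of $R^1(G_\Gamma)$ are the coordinate subspaces spanned by the parts of size $\ge 2$, and each of these is $0$-isotropic (there are no edges inside a part). So forbidding $p=0$ components forbids \emph{every} part of size $\ge 2$, i.e.\ forces $\overline{\Gamma}$ to be edgeless and $\Gamma$ to be a complete graph. Moreover your identification is wrong as stated: the right-angled Artin group on $K_{2,\dots,2}$ is $F_2\times\dots\times F_2$, which is not abelian, so your conclusion would not even yield the statement being proved. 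With the corrected combinatorics one gets $G_\Gamma\cong \Z^{b_1(X)}$, and the evenness of the rank comes from $b_1(X)$ being even for a normal projective variety (Remark \ref{rk1}(iv), Proposition \ref{prop:b1normal}), not from any matching structure. Finally, the mechanism excluding $p=0$ components in the projective case is not a ``hard Lefschetz-type symmetry of the weight filtration'': it is that, for $X$ projective, the positive-dimensional components of $V^1(X)_1$ arise via Theorem \ref{thm2} from surjections onto smooth \emph{projective} curves with $E(C)<0$, hence of genus $\ge 2$, so their tangent spaces are $1$-isotropic of dimension $\ge 4$ (the proper case of Theorem C in \cite{DPS}), and $1$-formality (Corollary \ref{maincor2}) transfers this statement to the resonance variety.
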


Finally, we explain how to compute characteristic varieties of
normal projective varieties, in some cases, by smoothing them. 
Let $X$ be an  analytic space equipped with a projective holomorphic
map $f:X\to \Delta$ to the open disk which is smooth over $\Delta^*=
\Delta-\{0\}$ and such that  $X_0= f^{-1}(0)$ is normal. Then $X$ is normal  \cite[6.8.1]{EGA}.
Since it is convenient to allow a boundary, we replace $\Delta$, and accordingly $X$, by a smaller closed disk of radius, say $\epsilon$.
Let us fix  a holomorphic section
$\sigma:\Delta\to X$ (which is not a restriction).  We keep this notation throughout this section.
It follows from our assumptions that $X^*= X-X_0\to\Delta^*$ is a fibre bundle. We can also assume, after shrinking $\Delta$ if necessary, that we have a deformation retraction of $X$ to $X_0$ which collapses $\sigma(\Delta)$ to $\sigma(0)$. It follows that we have a specialization map
\begin{equation}
  \label{eq:spPi}
\pi_1(X_t, \sigma(t))\to \pi_1(X_0, \sigma(0))
\end{equation}
for any $t\in \partial\Delta$. The main results in this setting, comparing the invariants associated to the smooth general fiber $X_t$ to those of the normal special fiber $X_0$, are the following.

\begin{prop}\label{prop:spPi}
  The map \eqref{eq:spPi} is surjective.
\end{prop}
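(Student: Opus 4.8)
The plan is to replace the specialization map by a map induced by an honest inclusion. The deformation retraction $r\colon X\to X_0$ constructed in the set-up is a homotopy equivalence, and since it collapses $\sigma(\Delta)$ to $\sigma(0)$ it carries $\sigma(t)$ to $\sigma(0)$; hence it induces an isomorphism $r_*\colon\pi_1(X,\sigma(t))\xrightarrow{\sim}\pi_1(X_0,\sigma(0))$. By construction the map \eqref{eq:spPi} equals $r_*\circ(\iota_t)_*$, where $\iota_t\colon X_t\hookrightarrow X$ is the inclusion of the fibre and $(\iota_t)_*\colon\pi_1(X_t,\sigma(t))\to\pi_1(X,\sigma(t))$ is the induced homomorphism. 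As $r_*$ is an isomorphism, it suffices to prove that $(\iota_t)_*$ is surjective.

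Next I would factor $\iota_t$ through the smooth locus $X^*=X\setminus X_0$. Since $f$ is proper and smooth over $\Delta^*$, Ehresmann's theorem makes $f\colon X^*\to\Delta^*$ a locally trivial fibre bundle with fibre $X_t$, and $\sigma|_{\Delta^*}$ is a section of it. Because $\Delta^*\simeq S^1$ has vanishing $\pi_2$, the homotopy exact sequence of the bundle reduces to a split short exact sequence
\[
1\longrightarrow\pi_1(X_t,\sigma(t))\longrightarrow\pi_1(X^*,\sigma(t))\longrightarrow\pi_1(\Delta^*,t)\longrightarrow 1 .
\]
Hence $\pi_1(X^*,\sigma(t))$ is generated by the image of $\pi_1(X_t,\sigma(t))$ together with $\sigma_*\pi_1(\Delta^*,t)$. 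But $\sigma$ extends over the whole disc $\Delta$, so the composite $\pi_1(\Delta^*,t)\to\pi_1(X^*)\to\pi_1(X)$ factors through $\pi_1(\Delta,t)=1$; equivalently, the loop $\sigma(\partial\Delta)$ bounds the disc $\sigma(\Delta)$ in $X$. Therefore the image of $\pi_1(X_t,\sigma(t))$ in $\pi_1(X,\sigma(t))$ coincides with the image of $\pi_1(X^*,\sigma(t))$, and we are left with showing that $\pi_1(X^*,\sigma(t))\to\pi_1(X,\sigma(t))$ is onto.

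This last step is where normality enters, and it is the part I expect to carry the real content. Since $X$ is normal it is locally irreducible, and $X_0=f^{-1}(0)$ is a nowhere dense closed analytic subset (a Cartier divisor, of real codimension $\ge 2$); consequently $X^*$ is connected and dense in $X$, and the inclusion of the complement of such a subset in a locally irreducible complex space induces a surjection on fundamental groups. Concretely, one triangulates the pair $(X,X_0)$ so that $X_0$ becomes a subcomplex of real codimension $\ge 2$ and pushes any loop off $X_0$ by a general position argument using connectedness of links; this is a standard fact (see, for instance, the Lecture Notes of Fulton and Lazarsfeld, or Goresky--MacPherson's \emph{Stratified Morse Theory}). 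Note that normality cannot be dropped here: for $X$ a node, $X\setminus\{0\}$ is already disconnected. Combining this with the previous paragraph shows $(\iota_t)_*$ is surjective, hence so is \eqref{eq:spPi}. Thus the only genuine input is the codimension-two surjectivity statement; everything else is formal manipulation of the fibration sequence together with the contractibility of $\Delta$.
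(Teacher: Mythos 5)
Your argument is correct and is essentially the paper's own proof: surjectivity of $\pi_1(X^*)\to\pi_1(X)$ (the statement about complements of closed analytic subsets in normal varieties, which the paper invokes via Theorem \ref{thm1} rather than re-sketching), the split exact sequence of the fibre bundle $X^*\to\Delta^*$, and the triviality of the section's image because $\sigma$ extends over the contractible disc. The extra detail on identifying the specialization map with $r_*\circ(\iota_t)_*$ is a correct unwinding of the definition but not a departure from the paper's route.
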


\begin{thm}\label{thm:smoothing1}
  With the same assumptions as above, with $t\in \partial\Delta$, the
  following holds.
  \begin{enumerate}
  \item The specialization map induces an isomorphism of vector spaces
$$H^1(X_0,\Z)\cong H^1(X_t,\Z).$$
\item The monodromy acts trivially on $H^1(X_t,\Z)$.
\item The specialization map induces an injection of affine algebraic groups
$$H^1(X_0,\C^*)\hookrightarrow H^1(X_t,\C^*)$$
which is an isomorphism on the identity components, or equivalently the
cokernel is finite.
\item Viewing (3) as an inclusion, we have
$$V^1(X_0)= V^1(X_t)\cap H^1(X_0,\C^*).$$

  \end{enumerate}
\end{thm}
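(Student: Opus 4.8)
The plan is to reduce statements (1)--(3) to the single equality $b_1(X_0)=b_1(X_t)$ of first Betti numbers, and then to deduce (4) from Theorem \ref{thm2} by a finite \'etale cover argument. To begin, abelianizing the surjection $\pi_1(X_t,\sigma(t))\twoheadrightarrow\pi_1(X_0,\sigma(0))$ of Proposition \ref{prop:spPi} gives a surjection $H_1(X_t,\Z)\twoheadrightarrow H_1(X_0,\Z)$; applying $\Hom(-,\Z)$ and $\Hom(-,\C^*)$ shows that the specialization map is injective on $H^1(-,\Z)$ and on $H^1(-,\C^*)$, and in particular that $b_1(X_0)\le b_1(X_t)$.

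The heart of the matter is the reverse inequality $b_1(X_t)\le b_1(X_0)$. Since $X$ is normal and irreducible and dominates the regular curve $\Delta$, the morphism $f$ is flat, so $s\mapsto h^1(X_s,\OO_{X_s})$ is upper semicontinuous on $\Delta$; as $h^1(\OO_{X_s})$ is constant on $\Delta^*$, this gives $\tfrac12 b_1(X_t)=h^1(\OO_{X_t})\le h^1(\OO_{X_0})$, the first equality being Hodge theory on the smooth projective variety $X_t$. On the other side, $X_0$ is normal and projective, so $H^1(X_0,\Q)$ injects into the (pure weight one) Hodge structure $H^1$ of a resolution of singularities and is therefore itself pure of weight one; consequently $\operatorname{Pic}^0(X_0)$ is an abelian variety of dimension $h^1(\OO_{X_0})$, and the exponential sequence realizes its period lattice as a quotient of $H^1(X_0,\Z)$, so that $2h^1(\OO_{X_0})\le b_1(X_0)$. (Equivalently: the cone of $\OO_{X_0}\to\underline{\Omega}^0_{X_0}$ is supported in positive degrees, $X_0$ being seminormal, so $H^1(X_0,\OO_{X_0})\hookrightarrow\HH^1(X_0,\underline{\Omega}^0_{X_0})=\gr^0_F H^1(X_0,\C)$, which has dimension $\tfrac12 b_1(X_0)$.) Combining the two inequalities yields $b_1(X_0)=b_1(X_t)$.

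Given this equality, $H_1(X_t,\Z)/\mathrm{tors}\twoheadrightarrow H_1(X_0,\Z)/\mathrm{tors}$ is a surjection of free abelian groups of equal rank, hence an isomorphism, and the kernel of $H_1(X_t,\Z)\to H_1(X_0,\Z)$ is finite. Dualizing over $\Z$ gives (1), and applying $\Hom(-,\C^*)$ gives (3) (the cokernel being $\Hom$ of the finite kernel, hence finite, and the restriction to identity components being dual to an isomorphism). For (2): any class restricted from $X$ to $X_t$ is monodromy invariant, since it lies in the image of $H^0(\Delta^*,R^1 f_*\Z)$ under the Leray edge map for the fibre bundle $X^*\to\Delta^*$; and by (1) every class is so restricted, because the composite $H^1(X_0,\Z)\xrightarrow{\ \sim\ }H^1(X,\Z)\to H^1(X_t,\Z)$ is the specialization isomorphism. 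Hence the monodromy acts trivially on $H^1(X_t,\Z)$.

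For (4), the inclusion $V^1(X_0)\subseteq V^1(X_t)\cap H^1(X_0,\C^*)$ is immediate because the inflation map $H^1(X_0,L)\to H^1(X_t,L)$ along $\pi_1(X_t)\twoheadrightarrow\pi_1(X_0)$ is injective for every $L\in H^1(X_0,\C^*)$. For the reverse inclusion, first take $L$ of finite order $m$: its associated \'etale $\Z/m$-cover $X_0'\to X_0$ is normal, it is the restriction of the \'etale cover $X'=X\times_{X_0}X_0'\to X$, and $X'\to\Delta$ again satisfies our hypotheses (projective, smooth over $\Delta^*$, normal special fibre, and $\sigma$ lifts since $\Delta$ is simply connected); applying (1) to $X'$ gives a Galois-equivariant isomorphism $H^1(X_0',\C)\cong H^1(X_t',\C)$, and on $L$-isotypic parts this reads $H^1(X_0,L)\cong H^1(X_t,L)$, so $L\in V^1(X_0)\iff L\in V^1(X_t)$. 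Now by Theorem \ref{thm2} applied to $X_t$, each isolated point of $V^1(X_t)$ is a torsion character and each positive-dimensional component is a coset $W=\chi\, f_t^*H^1(C,\C^*)$ with $\chi$ torsion; since $b_1(X_0)=b_1(X_t)$ the subgroup $H^1(X_0,\C^*)$ contains the identity component of $H^1(X_t,\C^*)$, hence the subtorus $f_t^*H^1(C,\C^*)$, so $W\cap H^1(X_0,\C^*)$ is either empty or all of $W$; in the latter case $W$ has a dense set of torsion characters, each in $V^1(X_0)$ by the previous step, and $V^1(X_0)$ is Zariski closed, so $W\subseteq V^1(X_0)$. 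This proves $V^1(X_t)\cap H^1(X_0,\C^*)\subseteq V^1(X_0)$, completing (4). The main obstacle is the inequality $h^1(\OO_{X_0})\le\tfrac12 b_1(X_0)$ used in the second step: it asserts that $H^1(\OO_{X_0})$ is no larger than the $(0,1)$-Hodge piece, and it genuinely uses normality of $X_0$ --- it already fails for a cuspidal plane cubic --- so this is where normality does the essential work, the rest being bookkeeping around Proposition \ref{prop:spPi} and Theorem \ref{thm2}.
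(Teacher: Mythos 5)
Your proof is correct and follows essentially the same route as the paper: reduce (1)--(3) to $b_1(X_0)=b_1(X_t)$ via upper semicontinuity of $h^1(\OO_{X_s})$ together with $b_1=2h^1(\OO)$ for the normal projective fibre $X_0$ (the paper's Proposition \ref{prop:b1normal}, proved the same way via $\operatorname{Pic}^0$ and the exponential sequence), and prove (4) by combining the injective inflation map with density of torsion points in the components of $V^1(X_t)$ and the Kummer cover trick applied to (1). The only blemish is the notation $X\times_{X_0}X_0'$ (there is no map $X\to X_0$; you mean the cover of $X$ determined by $L$ viewed as a local system on $X$ via the retraction, exactly the paper's $Specan(\bigoplus L^{\otimes i})$), which does not affect the argument.
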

It follows from (4) above that each strictly positive dimensional irreducible component $V_i$ in $V^1(X_0)$ gives rise to a component $V_{i,t}$ in $V^1(X_t)$, which in turn, by \cite{B}, corresponds to a map $X_t \to C_{i,t}$ onto a smooth projective curve $C_{i,t}$ for any $t \in \Delta$. The next result shows that this family of maps $X_t \to C_{i,t}$ can be chosen to depend algebraically on $t \in \Delta$.
\begin{thm}\label{thm:smoothing2}
With the same assumptions as above,
  there exists a finite collection torsion  characters $\chi_i\in H^1(X_0,\C^*),\,
  i=1,\ldots n,\ldots N$, and  commutative diagrams
$$
\xymatrix{
 X\ar[rd]_{f}\ar[r]^{\pi_i} & C_i\ar[d]^{p_i} \\ 
  & \Delta
}
$$
where  $C_i\to \Delta,i=1,\ldots n$ are
relative smooth projective curves, such that
$$V^1(X_t)\cap H^1(X_0,\C^*) =\bigcup_{i=1}^n
\chi_i\,\pi_{i,t}^*H^1(C_{i,t},\C^*)\cup \bigcup_{i=n+1}^N\{\chi_i\}$$
for all $t\in \Delta$.
\end{thm}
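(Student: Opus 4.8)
The plan is to bootstrap from Theorem \ref{thm:smoothing1}(4), which already identifies $V^1(X_t)\cap H^1(X_0,\C^*)$ with $V^1(X_0)$ as a set for each $t\in\partial\Delta$, with the family being locally constant over $\Delta^*$ because $X^*\to\Delta^*$ is a fibre bundle and the monodromy acts trivially on $H^1$ by part (2). First I would apply Theorem \ref{thm2} to the normal variety $X_0$: each strictly positive-dimensional irreducible component $V_i$ of $V^1(X_0)$ is a translated subtorus $\chi_i\,\pi_{i,0}^*H^1(C_{i,0},\C^*)$ arising from a surjective map $\pi_{i,0}\colon X_0\to C_{i,0}$ with connected generic fibre onto a smooth curve of nonpositive Euler characteristic and from a torsion character $\chi_i$; the zero-dimensional components are themselves torsion characters $\chi_{n+1},\dots,\chi_N$. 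Since $X_0$ is projective, each $C_{i,0}$ can be taken projective. This gives the collection of $\chi_i$ and, for $i=1,\dots,n$, the maps over the central fibre; the whole problem is to spread these maps out over $\Delta$.

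The heart of the argument is the deformation of each fibration $\pi_{i,0}$ to a relative fibration $\pi_i\colon X\to C_i$ over $\Delta$. I would realize the fibration $\pi_{i,0}$ functorially: up to replacing $\chi_i$, the component $V_i$ is a pullback of the full character torus of $C_{i,0}$, and such components are controlled by the "orbifold pencils" / Stein factorization picture of Arapura, so $\pi_{i,0}$ is canonically attached to $V_i$. Now $X$ is a projective variety over $\Delta$ (after shrinking to a closed disk of radius $\epsilon$) with normal central fibre and smooth general fibre; by the Stein-factorization/relative Albanese machinery I would produce, possibly after a further shrinking of $\Delta$ and a finite base change that can be absorbed since we only care about the underlying analytic family, a relative smooth projective curve $C_i\to\Delta$ and a morphism $\pi_i\colon X\to C_i$ over $\Delta$ whose restriction to $X_0$ is $\pi_{i,0}$ and whose restriction to each $X_t$, $t\in\Delta^*$, is a surjective map with connected fibres to the smooth projective curve $C_{i,t}$. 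Here one uses that the relevant component $V_{i,t}\subset V^1(X_t)$ deforms flatly and that, by \cite{B} (the smooth projective case of the isotropic subspace theorem), it is cut out by exactly one pencil $X_t\to C_{i,t}$, so the family of pencils is unique and hence glues over $\Delta^*$; one then extends across $0$ using properness of $X\to\Delta$ and normality of $X$.

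Having the diagrams, the final identity is then a fibrewise statement. For each $t\in\Delta$ we have, by construction, $\chi_i\,\pi_{i,t}^*H^1(C_{i,t},\C^*)\subset V^1(X_t)$ for $i\le n$ and $\chi_i\in V^1(X_t)$ for $n<i\le N$, because pullbacks of positive-dimensional character-torus components along pencils of nonpositive Euler characteristic always lie in $V^1$, and torsion characters in $V^1(X_0)$ stay in $V^1(X_t)$ by the inclusion of Theorem \ref{thm:smoothing1}(3)--(4) propagated over $\Delta^*$ by local constancy and across $0$ by semicontinuity. Conversely, any point of $V^1(X_t)\cap H^1(X_0,\C^*)$ lies, for $t\in\partial\Delta$, in $V^1(X_0)$ by Theorem \ref{thm:smoothing1}(4), hence in one of the listed pieces for that value of $t$; and since both sides are algebraic subsets of the total space $H^1(X_0,\C^*)\times\Delta$ agreeing on the circle $\partial\Delta$ (a set with nonempty interior in the $t$-direction? — no), I would instead argue by local constancy over $\Delta^*$ plus upper semicontinuity of $t\mapsto\dim H^1(X_t,L)$ to get the reverse inclusion at every $t\in\Delta$, including $t=0$ where it is Theorem \ref{thm2} for $X_0$ again.

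The main obstacle I anticipate is the middle step: producing the \emph{relative} curves $C_i\to\Delta$ and morphisms $\pi_i$ over the whole disk, rather than just fibrewise maps $\pi_{i,t}$. Fibrewise one gets these for free from \cite{B}; the content is that they fit into an algebraic (holomorphic) family and extend across the singular central fibre. This requires knowing that the pencil attached to a positive-dimensional component of $V^1$ is unique (so there is no monodromy permuting a multi-valued family of maps over $\Delta^*$), then invoking relative Stein factorization of $X\to\Delta$ — equivalently, descending the family of maps through the relative Albanese or Picard scheme of $X/\Delta$ — and finally using the normality of $X$ and properness of $f$ to fill in $t=0$. A secondary technical point is matching up the \emph{same} finite set of torsion characters $\chi_i$ uniformly in $t$; this follows because $H^1(X_0,\C^*)\hookrightarrow H^1(X_t,\C^*)$ is the fixed specialization inclusion and the torsion points in question are the ones coming from $X_0$, so there is no drift.
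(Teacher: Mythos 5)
Your overall architecture matches the paper's: decompose $V^1(X_0)$ via Theorem \ref{thm2}, use Theorem \ref{thm:smoothing1}(4) and local constancy over $\Delta^*$ to identify $V^1(X_t)\cap H^1(X_0,\C^*)$ fibrewise, get fibrewise pencils $\pi_{i,t}\colon X_t\to C_{i,t}$ from \cite{B}/Theorem \ref{thm2}, and then assemble these into relative curves over $\Delta$. The set-theoretic identity at the end is indeed the easy part. But the step you yourself flag as ``the main obstacle'' --- producing the relative smooth curves $C_i\to\Delta$ and the morphisms $\pi_i\colon X\to C_i$ extending across the singular central fibre --- is the entire content of the theorem, and your proposal does not carry it out; it gestures at ``relative Stein factorization / relative Albanese machinery'' and ``properness plus normality'' without supplying the construction. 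That is a genuine gap, not a routine verification: properness and normality tell you a continuous extension of $\pi_i^*$ would be holomorphic, but you first need a candidate curve $C_{i,0}$ in the family (i.e.\ you must show the family of curves over $\Delta^*$ extends to a \emph{smooth} curve over $0$), and you must rule out that the closure of the graph of $\pi_i^*$ in $X\times C_i$ has positive-dimensional fibres over points of $X_0$. Also, your remark that one may pass to ``a finite base change that can be absorbed'' is not available here: a base change replaces the family $X\to\Delta$, whereas the theorem asserts the diagrams exist for the given $f$.

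For comparison, the paper resolves the middle step by linearizing: it sets $U_{i,t}=\pi_{i,t}^*H^1(C_{i,t},\Z)$, characterizes $U_{i,t,\C}$ intrinsically as the identity component of $\exp^{-1}V_{i,t}$ (so that $U_{i,t}$ is the specialization of $U_{i,0}$), observes that $\mathcal{U}_i=\bigcup_{t\neq 0}U_{i,t}$ is a sub-variation of Hodge structure of $R^1f_*\Z|_{\Delta^*}$ with trivial monodromy, and builds the family of curves as the image of the relative Albanese map into the associated family of quotient abelian varieties $A_i$. Trivial monodromy then gives a stable extension of $C_i^*$ over $0$ which is in fact smooth there. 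Finally, the extension of $\pi_i$ across $X_0$ is obtained by taking the closure $\Gamma$ of the graph and proving that $q(p^{-1}(x_0))$ is a point: this uses a cohomological argument (an analytic arc through $x_0$ transverse to $X_0$, forcing the restriction map $H^1(C_{i,0},\C)\to H^1(p^{-1}(x_0),\C)$ to vanish), in the spirit of Proposition \ref{propaux}. None of these ingredients appears in your proposal, so as written it establishes only the fibrewise statement, not the existence of the commutative diagrams over $\Delta$.
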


It would be interesting to find an example of a group $G$ which is the fundamental group of a normal variety, but not the fundamental group of a smooth variety.

\medskip

We are all very grateful to the Institute of Advanced Study in Princeton for the excellent environment provided during the work on this project. We also thank J\'anos Koll\'ar for drawing our attention to  the references \cite{FL} and  \cite{KoSa}.

\section{A basic fact on fundamental groups of normal analytic varieties} \label{s1}

Let $X$ be an $n$-dimensional irreducible  complex normal analytic variety, with $n \geq 2$, $X_{reg}$ the open subset of regular points of $X$ and $p: \wX \to X$ a resolution of singularities for $X$ such that $p: p^{-1}(X_{reg}) \to X_{reg}$ is an isomorphism. We use this isomorphism to identify $X_{reg}$ to an open subset of $\wX$ and denote by 
$i:X_{reg} \to X$ and $\wi:X_{reg} \to \wX$ the corresponding inclusions, such that $i=p\circ \wi$.
 The first key result is the following, see for a proof (0.7) (B) in \cite{FL} or Proposition 2.10 in \cite{KoSa}.

\begin{thm} \label{thm1} 
The morphisms $p_{\sharp}:\pi_1(\wX) \to \pi_1(X)$ and $i_{\sharp}:\pi_1(X_{reg}) \to \pi_1(X)$ 
are surjective. Moreover, for any proper closed analytic subset $A \subset X$, there is a surjection
$\pi_1(X \setminus A) \to \pi_1(X)$ induced by the inclusion.
\end{thm}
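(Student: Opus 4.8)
The plan is to deduce all three assertions from one statement: \emph{if $Y$ is a connected, locally irreducible complex analytic space and $B\subsetneq Y$ is a closed analytic subset, then for every $y_0\in Y\setminus B$ the inclusion induces a surjection $\pi_1(Y\setminus B,y_0)\surj\pi_1(Y,y_0)$.} Granting this, the surjectivity of $i_\sharp$ is the case $Y=X$, $B=X_{sing}$ (a normal space is unibranch, hence locally irreducible); the last sentence of the theorem is the statement itself; and $p_\sharp$ is then surjective because $i_\sharp$ factors as $\pi_1(X_{reg})\to\pi_1(\wX)\xrightarrow{p_\sharp}\pi_1(X)$ through the open embedding $p^{-1}(X_{reg})\cong X_{reg}\hookrightarrow\wX$, so a surjective composite forces the second map to be surjective. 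One uses here that $Y\setminus B$ is automatically connected and dense, since $Y$ is irreducible and $B$ is a proper analytic subset.

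To prove this single statement I would first establish that the inclusion $j\colon Y\setminus B\hookrightarrow Y$ is ``locally $1$-connected'' and then globalize. The local input consists of two classical facts: by the local conic structure of analytic sets, every point of $Y$ has arbitrarily small open \emph{contractible} neighborhoods $U$; and, since $Y$ is locally irreducible while $B$ is nowhere dense (its local components have dimension $<\dim Y$), every sufficiently small such $U$ is irreducible with $U\cap B\subsetneq U$, so that $U\setminus B$ is connected and dense in $U$. Hence for small $U$ the map $\pi_0(U\setminus B)\to\pi_0(U)$ is a bijection and $\pi_1(U\setminus B)\to\pi_1(U)$ is onto, the target being trivial.

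For the globalization I would run a Seifert--van Kampen argument for the open cover of $Y$ given by $Y\setminus B$ together with a family of small contractible neighborhoods $U_\alpha$ covering $B$. Each $U_\alpha$ is simply connected and contributes no new generator, while the conjugating paths needed for the piece $Y\setminus B$ may be taken inside $Y\setminus B$ by its path-connectedness, so $\pi_1(Y,y_0)$ is already generated by the image of $j_\sharp$. Concretely this is the familiar surgery on a based loop $\gamma$: subdivide $S^1$ into arcs each mapped by $\gamma$ into a single member of the cover, choose auxiliary intermediate points in the dense sets $U_\alpha\setminus B$, in the same components of the overlaps as the corresponding subdivision points, and replace $\gamma$ arc by arc with a path lying in $Y\setminus B$, using simple connectedness of each $U_\alpha$ and path-connectedness of $U_\alpha\setminus B$ to carry out and splice the homotopies.

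The reductions and the local conic/irreducibility input are routine; the step with real content is the van Kampen globalization, and the one point to watch is that the nerve of the cover $\{Y\setminus B\}\cup\{U_\alpha\}$ need not be simple --- pairwise intersections $U_\alpha\cap U_\beta$ can be disconnected --- so the bookkeeping in matching up the arcs of $\gamma$ has to be arranged carefully. A clean way to do this is to take a triangulation of the pair $(Y,B)$ fine enough that the closed star of each simplex lies in one of the $U_\alpha$, replace $\gamma$ by a simplicial approximation, and push it off $B$ one simplex at a time. This is essentially the argument carried out in (0.7)(B) of \cite{FL} and in Proposition 2.10 of \cite{KoSa}.
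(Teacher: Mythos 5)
Your argument is sound, but note that the paper itself does not prove Theorem \ref{thm1} at all: it simply cites (0.7)(B) of \cite{FL} and Proposition 2.10 of \cite{KoSa}, so any comparison is really with those references. Your reduction is the right one (normal $\Rightarrow$ unibranch $\Rightarrow$ locally irreducible; the statement for $i_\sharp$ and for $X\setminus A$ is the single assertion $\pi_1(Y\setminus B)\surj\pi_1(Y)$, and $p_\sharp$ follows from the factorization $i=p\circ\wi$), and your local input is exactly the crucial use of unibranchness: arbitrarily small contractible (conic) neighborhoods $U$ that are irreducible, so that $U\setminus B$ is connected and dense. For the globalization, your loop-surgery does go through, and in fact the difficulty you flag about disconnected overlaps is already neutralized by your own prescription of choosing the auxiliary points in the path component of the subdivision point inside the overlap (complex spaces are locally path-connected, and $Y\setminus B$ is dense open, so such points exist); moreover, whenever a subdivision arc already lies in $Y\setminus B$ its endpoints do too, so there the connecting paths can be taken constant and no splicing problem arises. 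The appeal to a triangulation of the pair $(Y,B)$ and simplicial approximation is therefore unnecessary, and it quietly invokes the nontrivial triangulability of analytic pairs without adding anything. The cited sources argue more slickly via covering spaces: given a connected covering $\tilde Y\to Y$, the local fact that $U\setminus B$ is connected for small $U$ shows $\tilde Y\setminus(\text{preimage of }B)$ is still connected, which is equivalent to the desired surjectivity; that formulation avoids all path bookkeeping, while your van Kampen version has the virtue of being elementary and explicit. So: correct, essentially in the spirit of the references the paper leans on, with the triangulation paragraph best deleted in favor of either your own surgery as already set up or the covering-space reformulation.
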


As remarked in  (0.7) (B) in \cite{FL}, the condition on $X$ being normal in the above result can be relaxed by asking that $X$ is {\it unibranch}, i.e. for any point $x \in X$, the analytic germ $(X,x)$ is irreducible. This class strictly contains the class of normal varieties, e.g. consider the product between a cuspidal curve and a smooth variety, but leads to the same family of topological spaces. Indeed, $X$ is a unibrach variety if and only if the normalization morphism $\nu: X' \to X$ is a homeomorphism (in both classical and Zariski topologies). However, in practice, it might be easier to check that a given variety is unibranch rather than normal. We leave the reader to reformulate the following results for unibranch varieties, and assume from now on that $X$ is normal unless stated otherwise.

\begin{rk} \label{rk1} 
(i)  When $n=2$, an alternative proof of Theorem \ref{thm1} can be obtained using the following result, which is a special case of Proposition 3.B.2 in \cite{BK}.

Let $X$ and $Y$ be two irreducible complex normal surfaces and $f:Y \to X$ a surjective proper morphism. If all the fibers of $f$ are connected (resp. simply-connected), then 
the morphism $f_{\sharp}:\pi_1(Y) \to \pi_1(X)$ is surjective (resp. an isomorphism). When $f$ is a resolution of singularities for $X$, the fibers of $f$ are always connected by Zariski Main Theorem, and they are simply-connected if and only if for each singularity $(X,x)$ of $X$ the corresponding exceptional divisor is a tree of rational curves. This happens exactly when $X$ is a $\Q$-homology manifold, and hence in such a situation  one has 
\begin{equation} \label{eq01} 
\pi_1(X)=\pi_1(\wX).
\end{equation}
(ii) The morphism $\wi_{\sharp}: \pi_1(X_0) \to \pi_1(\wX)$ is clearly a surjection for any $n$ (by an obvious transversality argument) and moreover
\begin{equation} \label{eq02} 
\wi^*:H^1(\wX,\C) \to H^1(X_{reg},\C)
\end{equation}
 is an isomorphism when $n=2$,  see Lemma 2 in \cite{DPSbull}.
We do not know whether the isomorphisms    \eqref{eq01} and  \eqref{eq02} hold for $n>2$ as well. 

(iii) Easy examples (e.g. $X=S_6$, the degree 6 cyclic covering of $\PP^2$ ramified along a Zariski sextic curve with 6 cusps situated on a conic) show that the other morphisms induced at $H^1$- level  $i^*$ and $p^*$ are not isomorphisms in general, even when $n=2$.

(iv) The monomorphism $p^*:H^1(X,\Q) \to H^1(\wX,\Q)$ implied by Theorem \ref{thm1} shows that for a projective normal variety $X$, the first cohomology $H^1(X,\Q)$ is a pure Hodge structure of weight 1, in particular the first Betti number $b_1(X)$ is even, see also Proposition \ref{prop:b1normal} below for a precise formula. For an arbitrary normal variety $X$, it follows that $H^1(X,\Q)$ has weights 1 and 2.

\end{rk}

\begin{rk} \label{rkMumford} 
There are however differences between certain properties of fundamental groups of normal varieties and  fundamental groups of smooth varieries. Perhaps the most basic is the following. If $X$ is a smooth complex (analytic) variety and $A $ is a closed subvariety of codimension $\codim A \geq 2$, then the inclusion induces an isomorphism $\pi_1(X \setminus A) \to \pi_1(X).$ To see this, one can use the usual Thom transversality theorems for smooth real manifolds.

When $X$ is normal, this result is no longer true. For instance, when $(X,x)$ is the germ of a normal surface singularity, the fundamental group of (some representative of) $X$ is trivial, due to the conic local structure of analytic sets, see \cite{BV}. But the complement $X \setminus \{x\}$, which is homotopy equivalent to the link of the singularity, has a trivial fundamental group if and only if $(X,x)$ is a smooth germ, by the celebrated result of Mumford \cite{Mu}.

\end{rk}

\section{Proof of Theorem   \ref{thm2}}

Let $p: \wX\to X$ be a resolution of singularities as above and note that by Chow's Lemma we can suppose that $\wX$ is a quasi-projective variety.

\begin{lem}\label{lemma:inject}
  If $L$ is a locally constant sheaf on $X$, then $p^*:H^1(X,L)\to H^1(\wX, \wL)$ is injective, with $\wL=p^*L$.
\end{lem}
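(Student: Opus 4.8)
The plan is to reduce the statement to a purely group-theoretic fact and then feed in the surjectivity of $p_{\sharp}$ from Theorem \ref{thm1}.

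First I would pass from the cohomology of spaces to group cohomology. Fix a base point $x_0\in X_{reg}$, regarded simultaneously as a point of $X$ and of $\wX$, and let $V=L_{x_0}$, a $\pi_1(X,x_0)$-module; then $\wL=p^*L$ has the same stalk $V$, with $\pi_1(\wX,x_0)$ acting through $p_{\sharp}$. For any connected space $M$ with the homotopy type of a CW complex (e.g.\ a complex analytic variety), the Cartan--Leray spectral sequence of the universal cover $\widetilde M\to M$ yields a five-term exact sequence
$$
0\to H^1(\pi_1(M),V)\to H^1(M,L)\to H^1(\widetilde M,V)^{\pi_1(M)};
$$
since $\widetilde M$ is simply connected we have $H_1(\widetilde M,\Z)=0$, hence $H^1(\widetilde M,V)=0$ by the universal coefficient theorem, so the edge map $H^1(\pi_1(M),V)\xrightarrow{\ \sim\ }H^1(M,L)$ is an isomorphism, natural in the pointed space $(M,x_0)$. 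Applying this to $X$ and to $\wX$ and using naturality, the map $p^*$ of the Lemma is identified with the map $H^1(\pi_1(X),V)\to H^1(\pi_1(\wX),V)$ induced by the group homomorphism $p_{\sharp}\colon\pi_1(\wX)\to\pi_1(X)$, i.e.\ with an inflation map.

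Next comes the only place where the hypotheses are used: by Theorem \ref{thm1}, $p_{\sharp}$ is surjective. Set $N=\ker p_{\sharp}\trianglelefteq \pi_1(\wX)$, so that $\pi_1(\wX)/N\cong\pi_1(X)$. Because $\wL=p^*L$, the normal subgroup $N$ acts trivially on the stalk $V$, so $V^N=V$. The inflation--restriction exact sequence (the five-term sequence of the Hochschild--Serre spectral sequence for $1\to N\to\pi_1(\wX)\to\pi_1(X)\to 1$) then reads
$$
0\to H^1(\pi_1(X),V^N)\to H^1(\pi_1(\wX),V)\to H^1(N,V)^{\pi_1(X)},
$$
whose first arrow is exactly $p^*$ under the identifications above. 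Injectivity of $p^*$ follows.

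I do not expect a genuine obstacle here: the mathematical content is entirely carried by Theorem \ref{thm1}, and everything else is standard homological algebra. The only points deserving a word of care are that complex analytic varieties have the homotopy type of CW complexes (so the reduction to group cohomology is legitimate) and that $N$ acts trivially on the stalk of $\wL$ (immediate from $\wL=p^*L$). As an alternative one can argue directly with the Leray spectral sequence of $p$: since $p$ is proper and birational and $X$ is normal, its fibres are connected by Zariski's main theorem, whence $p_*\wL=L$ by proper base change, and the five-term exact sequence of Leray gives $0\to H^1(X,L)\to H^1(\wX,\wL)\to H^0(X,R^1p_*\wL)$ with first map $p^*$.
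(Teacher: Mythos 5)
Your main argument is correct, but it takes a different route from the paper. You reduce everything to group cohomology: identify $H^1(M,L)\cong H^1(\pi_1(M),V)$ via Cartan--Leray (legitimate, since complex varieties are locally contractible and have CW homotopy type), identify $p^*$ with inflation along the surjection $p_{\sharp}$ of Theorem \ref{thm1}, note that $N=\ker p_{\sharp}$ acts trivially on the stalk because $\wL=p^*L$, and conclude by the five-term inflation--restriction sequence. The paper instead argues sheaf-theoretically with the Leray spectral sequence of $p$: connectedness of the fibres (Zariski's main theorem, using normality) gives $R^0p_*\wL=L$, so $H^1(X,L)=E_2^{1,0}=E_\infty^{1,0}$ injects into $H^1(\wX,\wL)$ --- exactly the alternative you sketch in your last paragraph. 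The trade-off: your argument needs the full strength of Theorem \ref{thm1} (surjectivity of $p_{\sharp}$) but works verbatim for any map inducing a surjection on fundamental groups and for local systems of any rank; the paper's argument uses only the connectedness of the fibres and, more importantly, produces as a by-product the exact sequence of Remark \ref{rk:leray}, $0\to H^1(X,L)\to H^1(\wX,\wL)\to H^1(E,\wL)$, which is needed later (in the definition of $V^1_{DR}(X)$ in the torsion-point lemma), so the group-cohomological shortcut would not fully replace the sheaf-theoretic computation in the paper's later use of it.
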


\begin{proof}
  Note that we have by Leray Theorem 
$$H^1(\wX,\wL)=\HH^1(X, \R p_*\wL).$$
The hypercohomology group in the right hand side can be computed using the usual $E_2$-spectral sequence
$$E_2^{p,q}=H^p(X,R^q p_*\wL).$$
We have $E_{\infty}^{1,0}=E_2^{1,0}=H^1(X,R^0 p_*\wL)$ and $R^0 p_*\wL=L$ since the fibers of $p$ are connected. From this we deduce an inclusion
\begin{equation} \label{eq7} 
H^1(X,L)=   E_{\infty}^{1,0}\rightarrow H^1(\wX,\wL).
\end{equation}
which  corresponds exactly to $p^*$. 
\end{proof}

\begin{rk}\label{rk:leray}
A continuation of this argument, shows that we have an exact sequence
$$0\to H^1(X,L)\to H^1(\wX, \wL) \to H^0(X, R^1p_*\wL)\cong H^1(E,\wL)$$
  where $E$ is the exceptional divisor.
\end{rk}

This Lemma combined with the fact that $p_{\sharp}$ is surjective yields the following.
\begin{cor}\label{corincl}
We have inclusions  $H^1(X,\C)\subseteq H^1(\wX, \C)$, $H^1(X,\C^*)\subseteq H^1(\wX, \C^*)$ and $V^1(X)_1\subseteq V^1(\wX)_1$ induced by $p^*$.
\end{cor}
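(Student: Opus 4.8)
The plan is to derive all three inclusions formally from Lemma \ref{lemma:inject} and the surjectivity statements in Theorem \ref{thm1}; no further geometric input is needed.

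First I would treat the two ambient inclusions. Applying Lemma \ref{lemma:inject} to the constant rank one local system $L=\C$ shows that $p^*\colon H^1(X,\C)\to H^1(\wX,\C)$ is injective, which is the first inclusion. For the second, I would use the canonical identification $H^1(Y,\C^*)=\Hom(\pi_1(Y),\C^*)$, valid because $\C^*$ is abelian, and observe that under it $p^*$ corresponds to precomposition with $p_{\sharp}\colon\pi_1(\wX)\to\pi_1(X)$. Since $p_{\sharp}$ is surjective by Theorem \ref{thm1}, precomposition is injective; moreover it is a homomorphism of the affine algebraic groups $\Hom(\pi_1(-),\C^*)$ with closed image, so $H^1(X,\C^*)\hookrightarrow H^1(\wX,\C^*)$ is an embedding of algebraic groups induced by $p^*$.

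Next I would handle the characteristic variety. For the unrestricted loci, if $L$ is a rank one local system on $X$ with $\dim H^1(X,L)\ge 1$, then Lemma \ref{lemma:inject} (for this particular $L$) gives $\dim H^1(\wX,p^*L)\ge\dim H^1(X,L)\ge 1$, so $p^*L\in V^1(\wX)$; hence $p^*(V^1(X))\subseteq V^1(\wX)$, and the same argument works with any $k$ in place of $1$. To pass to the restricted loci I would use that $p^*$ is an injective morphism of algebraic groups fixing the identity: it therefore carries a strictly positive dimensional irreducible component $V$ of $V^1(X)$ through $1$ onto an irreducible subvariety $p^*(V)\subseteq V^1(\wX)$ of the same positive dimension, still containing $1$. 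Picking an irreducible component $W$ of $V^1(\wX)$ with $p^*(V)\subseteq W$, we get $\dim W\ge\dim p^*(V)>0$ and $1\in W$, so $W\subseteq V^1(\wX)_1$ and hence $p^*(V)\subseteq V^1(\wX)_1$. Taking the union over all such $V$ gives $V^1(X)_1\subseteq V^1(\wX)_1$.

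The whole argument is formal once Lemma \ref{lemma:inject} and Theorem \ref{thm1} are in hand, so I do not expect a genuine obstacle. The one spot that deserves a moment's care is the last bookkeeping step: one must confirm that the image of a positive dimensional component through $1$ lies inside a positive dimensional component of $V^1(\wX)$ passing through $1$, and not merely somewhere inside $V^1(\wX)$; this is immediate because the injective morphism $p^*$ preserves dimension and sends $1$ to $1$.
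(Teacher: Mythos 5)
Your argument is correct and is exactly the paper's route: the paper deduces the corollary in one line from Lemma \ref{lemma:inject} (applied to $L=\C$ and to arbitrary rank one $L$) together with the surjectivity of $p_{\sharp}$ from Theorem \ref{thm1}, which you have simply spelled out, including the bookkeeping that an injective morphism of the character groups sends a positive dimensional component through $1$ into a positive dimensional component of $V^1(\wX)$ through $1$. No gap.
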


We discuss first the case of a non-translated component of positive dimension, i.e.  we take $V\subseteq
V^1(X)_1$. Since $\wX$ is quasiprojective, we can apply \cite{A} and find a map $g:\wX \to C$ with generic connected fiber onto
a smooth quasiprojective curve such that $g^*H^1(C,\C^*)\supseteq p^*V$. If
we can show that $g$ factors through $X$, the theorem will follow. The main step for this is the following.

\begin{prop}\label{propaux}
For any point $x\in X$,  the fibre $F_x=p^{-1}(y)$ maps to a point in $C$ under $g$. 
\end{prop}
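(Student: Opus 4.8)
The plan is to argue that $g$ is constant on each fiber $F_x = p^{-1}(x)$ by a combination of a rigidity/properness argument and the structure of $g$ coming from the Arapura machinery in \cite{A}. The key point is that $C$ is a curve, so a morphism from a projective connected variety to $C$ is either constant or surjective onto an open subset; hence it suffices to rule out that $g$ is non-constant on some $F_x$. First I would reduce to the case where $x$ is a singular point, since over $X_{reg}$ the map $p$ is an isomorphism and the fibers are points. Then I would use that $F_x$ is a connected projective variety (indeed a subvariety of the exceptional divisor $E$), and that $g|_{F_x} \colon F_x \to C$, if non-constant, would force $H^1(C,\C^*)$ — or rather the pullback torus $g^*H^1(C,\C^*)$, which contains $p^*V$ — to interact nontrivially with the restriction of local systems to $F_x$.

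The cleanest route I expect to work is cohomological, using Remark \ref{rk:leray}. Pick a local system $\wL = p^*L$ with $L \in V$, so that by construction $\dim H^1(\wX,\wL) \ge k > 0$, and the injection $H^1(X,L) \hookrightarrow H^1(\wX,\wL)$ of Lemma \ref{lemma:inject} has cokernel controlled by $H^0(X, R^1 p_*\wL) \cong H^1(E, \wL|_E)$. Now $\wL$ is pulled back from $X$, so its restriction to any fiber $F_x$ is \emph{trivial}. On the other hand, if $g|_{F_x}$ were non-constant, then since $g^* H^1(C,\C^*) \supseteq p^*V$, a generic $L \in V$ would pull back under $g$ to a local system on $\wX$ whose restriction to $F_x$ agrees with the pullback along $g|_{F_x} \colon F_x \to C$ of a generic (hence nontrivial) rank-one local system on $C$ — this restriction is nontrivial because $g|_{F_x}$ is a dominant map to the curve $C$ and a generic character of $\pi_1(C)$ does not die on the image. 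This contradicts triviality of $\wL|_{F_x}$. I would need to be slightly careful that the component $V$ is not itself a translate of a subtorus on which these pullback characters happen to be trivial on $F_x$; but in the non-translated case $V \subseteq V^1(X)_1$ passes through $1$ and has positive dimension, so it contains characters arbitrarily close to $1$ yet the subfamily $g^*H^1(C,\C^*)$ restricted to $F_x$ is either identically trivial (meaning $g|_{F_x}$ is constant, which is what we want) or generically nontrivial.

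Alternatively — and this may be the more robust formulation — I would invoke the fact from \cite{A} that the map $g \colon \wX \to C$ arises from an \emph{orbifold pencil}, i.e. $g$ has connected generic fiber and the component $g^*H^1(C,\C^*)$ records that the generic fiber is irreducible. A non-constant $g|_{F_x}$ would mean the fiber $F_x$ dominates $C$, so a generic fiber of $g$ would meet $F_x$; but a generic fiber of $g$ is a subvariety $G$ of $\wX$ with $p(G)$ a positive-dimensional subvariety of $X$ (since $G \not\subseteq E$ for generic fiber — the exceptional locus is a proper closed subset, and $g$ restricted to the complement already has the bulk of its fibers), while meeting $F_x = p^{-1}(x)$ forces $x \in p(G)$ for \emph{every} generic fiber $G$, i.e. $p(G) \ni x$ always, which is absurd since the $p(G)$ sweep out all of $X$ as $G$ varies and no single point lies on all of them unless $\dim X = 1$. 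The main obstacle in either approach is making precise the claim that $g|_{F_x}$ non-constant genuinely contradicts a cohomological or geometric constraint, i.e. controlling the behavior of the Arapura pencil $g$ over the singular locus of $X$; once $g|_{F_x}$ is constant for all $x$, the rigidity of maps to curves (and the universal property / Stein factorization of $p$, using $p_*\OO_{\wX} = \OO_X$ since $X$ is normal) immediately gives the factorization $g = h \circ p$ for a morphism $h \colon X \to C$, completing the proof of the proposition and, as indicated in the text, of Theorem \ref{thm2}.
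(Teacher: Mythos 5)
Your first (cohomological) route is essentially the paper's proof: the paper picks a \emph{nontorsion} $\rho\in V$, notes that $p^*\rho|_{F_x}$ is trivial because it is pulled back from the point $x$, and derives a contradiction because $p^*\rho=g^*\rho'$ with $\rho'$ nontorsion and $g|_{F}$ dominant onto $C$ for some irreducible component $F$ of $F_x$. The one step you leave informal --- ``a generic character of $\pi_1(C)$ does not die on the image'' --- is exactly what the paper nails down: it cuts a smooth curve $C'\subset F$ still dominating $C$, so that $H^1(C,\C)\to H^1(C',\C)$, hence $H^1(C,\C)\to H^1(F,\C)$, is injective, whence the map $H^1(C,\C^*)_1\to H^1(F,\C^*)_1$ of identity components has finite kernel and cannot kill a nontorsion character. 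Your worry about translated components is moot here, since the proposition is invoked for a positive-dimensional $V$, which always contains nontorsion points.

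Your ``alternative, more robust'' geometric argument, however, does not work as stated. From $g|_{F_x}$ dominant you correctly deduce that $x\in p(G)$ for every generic fibre $G$ of $g$; but the claimed absurdity --- that no single point of $X$ can lie on all the $p(G)$ because they sweep out $X$ --- is false in general. The fibres $G$ are pairwise disjoint in $\wX$, but their images $p(G)$ need not be: a family of divisors can sweep out $X$ while sharing a common point (think of a pencil with a base point, e.g.\ the lines through the origin in $\C^2$). Ruling out precisely this configuration is the content of the proposition, so that argument is circular unless supplemented by the cohomological input you already gave in the first route.
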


\begin{proof}
 Since the fiber $F_x$ is connected, it is enough to show that the restriction of $g$ to any irreducible component $F$ of $F_x$ is constant. Suppose that there is a component $F$ of $F_x$ such that the restriction $g:F \to C$ is dominant. By taking a generic linear section of the quasiprojective variety $F$ and taking the smooth part of an irrducible component of this intersection, we find a smooth curve $C' \subset F$ such that the restriction $g:C' \to C$ is still dominant. To continue we need the following.

\begin{lem}\label{lemaux}
  Let $\rho \in V$ be a nontorsion element. Then $p^*\rho|_F\not= 1$.
\end{lem}

\begin{proof}[Proof of lemma]
 We know that $p^*\rho = g^*\rho'$ for some character $\rho' \in H^1(C,\C^*)$.
 The map $C'\to C$ being dominant, it follows that
  $H^1(C,\C)\to H^1(C',\C)$ is injective (see for instance Lemma 6.10 in \cite{DPS}), and thus
 $H^1(C,\C)\to H^1(F,\C)$ is also injective. It follows that the  map
 $H^1(C,\C^*)_1\to H^1(F,\C^*)_1$ 
of connected components of the identity (where we view these as 
algebraic groups and the cohomology groups $H^1(C,\C)$ and $ H^1(F,\C)$ as their Lie algebras) has finite kernel. 
Thus the pull-back of $\rho'$ to $F$ is nontrivial. 
\end{proof} 

On the other hand, we see that $p^*\rho|_F$ is the pulback of
$\rho|_y$ which is necessarily trivial. This leads to a contradiction. Therefore Proposition \ref{propaux} is proved.
\end{proof}

We thus get a mapping $h:X \to C$ such that $g=h \circ p$. Since $p$ is proper, if $K\subset C$ is
closed then $h^{-1}(K) = p(g^{-1}(K)) $ is
closed. So that $h$  is continuous and therefore locally
bounded. Moreover, $h$ is regular on the open set $X_0$ as $p$ is an isomorphism above $X_0$.
Normality of $X$ shows then that $h$ is regular everywhere. If $L_C$ is a generic rank one local system on $C$, it follows from \cite{A} that 
\begin{equation} \label{gen1} 
\dim H^1(\wX,g^*L_C)=-E(C),
\end{equation}
in particular for such components the curve $C$ should have strictly negative Euler characteristic $E(C)=-k$. This result was reproved in \cite{D}, using only the existence of the map $g$ and the topology of constructible sheaves, hence Corollary 4.7 in \cite{D} can be applied to the map $h$ and gives
\begin{equation} \label{gen2} 
\dim H^1(X,h^*L_C)=-E(C)=k.
\end{equation}
This  proves Theorem \ref{thm2} in the case of a non-translated component.

Assume now that $V$ is translated and let $V'$ be the unique the irreducible component of $V^1(\wX)$ containing $p^*V$. Then by \cite{A} we infer the existence of a map $g:\wX \to C$ as above and of a torsion character $\chi ' \in H^1(\wX,\C^*)$ such that $V'=\chi ' g^*H^1(C,\C^*)$. It follows from Theorem 5.3 in \cite{D} that this character comes from a unique homomorphism
$T(g) \to \C^*$, where 
$$T(g)=\ker \{g_*:  H_1(\wX,\Z) \to H_1(C,\Z)\}/ \im \{i_*:H_1(F,\Z) \to H_1(\wX,\Z)\}.$$
Here $i:F \to \wX$ is the inclusion of a generic fiber $F$ of $g$ and $T(g)$  is a finite Abelian group determined by the multiple fibers of $g$.
Proceeding as above, we get a factorization $g=f\circ p$. It is clear that for any $c \in C$, $f^{-1}(c)$ is a multiple fiber of multiplicity $m_c>0$ if and only if  $g^{-1}(c)$ is a multiple fiber of multiplicity $m_c>0$. It follows that the two finite groups $T(f)$ (defined exactly as above using $f$ instead of $g$) and $T(g)$ are isomorphic under $p^*$. Hence there is a torsion character $\chi \in H^1(X,\C^*)$ such that $\chi'=p^*\chi$.
The injectivity of $p^*$ then implies that $V \subset \chi f^*H^1(C,\C^*)$, and hence we have equality, since $V$ is an irreducible component of $V^1(X)$ and $ \chi f^*H^1(C,\C^*) \subset V^1(X)$.

Let now $L$ and $L'$ be the rank one local systems on $X$ and $\wX$ associated to the torsion character $\chi$, and respectively $\chi'$. Then, for a generic rank one local system $L_C$ on $C$,
 Corollary 4.7 in \cite{D}  yields the following equality
\begin{equation} \label{gen3} 
\dim H^1(X,L \otimes h^*L_C)=-E(C)+ |\Sigma (R^0h_*(L))|,
\end{equation}
and a similar formula of $\dim H^1(\wX,L' \otimes g^*L_C)$, where $\Sigma(\F)$ denotes the {\it singular support} of the constructible sheaf $\F$. Hence in order to prove the the equality
$$\dim H^1(X,L \otimes h^*L_C)=\dim H^1(\wX,L' \otimes g^*L_C),$$
it is enough to show that
\begin{equation} \label{gen4} 
\Sigma (R^0h_*(L))=\Sigma (R^0g_*(L')).
\end{equation}
Let $c \in C$ be any point, and denote by $D_c$ a small disc in $C$ centered at $c$, and set
$T_c=h^{-1}(D_c)$ and $T'_c=g^{-1}(D_c)$. Then $c \in \Sigma (R^0h_*(L))$ if and only if the restriction $L|T_c$ is non trivial, and the same for  $c \in \Sigma (R^0g_*(L'))$, see for instance Lemma 4.2 in \cite{D}. Since $T'_c=p^{-1}(T_c)$ and $L'=p^*L$ it follows that $L|T_c$ is  trivial implies that $L'|T'_c$ is trivial. 

Conversely, suppose that $L'|T'_c$ is trivial and that $E_c$ is the exceptional divisor of $p: T_c' \to T_c$. Then $L'|(T_c'\setminus E_c)$ is trivial, and hence $L|T_{c,reg}$ is trivial.
The surjectivity $\pi_1(T_{c,reg}) \to \pi_1(T_{c})$ implied by Theorem \ref{thm1} yields the triviality of $L|T_c$.

It remains to prove the following.

\begin{lem}
  The isolated components of $V^1(X)$ consist of torsion points. 
\end{lem}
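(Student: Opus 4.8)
The plan is to deduce this from the already-established description of positive-dimensional components, together with Corollary \ref{corincl} and the structure of $V^1(\wX)$. The key point is that an isolated (zero-dimensional) component of $V^1(X)$ is a single character $\rho\in H^1(X,\C^*)$, and I want to show $\rho$ is torsion. Via the inclusion $p^*:H^1(X,\C^*)\hookrightarrow H^1(\wX,\C^*)$ of Corollary \ref{corincl}, the character $p^*\rho$ lies in $V^1(\wX)$ (since $H^1(X,p^*\rho^{\text{loc sys}})$ injects into $H^1(\wX,\dots)$ by Lemma \ref{lemma:inject}, though here I need to be slightly careful: injectivity of $p^*$ on $H^1$ only gives $p^*\rho\in V^1(\wX)$, not that $p^*\rho$ is isolated there). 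So the first step is: locate $p^*\rho$ inside $V^1(\wX)$.

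**Next I would** invoke the structure theorem for characteristic varieties of the smooth quasi-projective variety $\wX$ (Arapura's theorem, as used throughout this section): every component of $V^1(\wX)$ is of the form $\chi' g^*H^1(C,\C^*)$ for a torsion character $\chi'$ and an orbifold fibration $g:\wX\to C$, unless it is an isolated torsion point. If $p^*\rho$ lies on a positive-dimensional component $\wV=\chi' g^*H^1(C,\C^*)$, I would check that condition $(T)$ holds for $\wV$: indeed $\rho$ restricted to any fiber $F_x=p^{-1}(x)$ is trivial because $\rho$ is pulled back from $X$ and $F_x$ maps to a point under $p$; hence every $L\in\wV$, being of the form $p^*\rho\cdot g^*(\cdot)$ — wait, one must verify $\wV$ itself is a translate of a pullback from $X$. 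The cleaner route: by the converse direction of Theorem \ref{thm2} already proved above, if $(T)$ holds then $\wV=p^*V$ for a component $V$ of $V^1(X)$; but then $\dim V=\dim\wV>0$, contradicting that $\rho$ was an isolated point of $V^1(X)$. So $(T)$ must fail for the component of $V^1(\wX)$ through $p^*\rho$; yet I will argue $(T)$ automatically holds for the single character $p^*\rho$ restricted to fibers, and then use that $p^*\rho$ lies in the component to derive that $(T)$ holds for the whole component — this is the delicate point.

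**Here is the way around that subtlety.** Suppose $p^*\rho$ lies on a positive-dimensional component $\wV$ of $V^1(\wX)$. Pick a nontorsion $\sigma\in\wV$ near $p^*\rho$; writing $\wV=\chi' g^*H^1(C,\C^*)$, the ratio $\sigma\cdot(p^*\rho)^{-1}$ lies in $g^*H^1(C,\C^*)$, so its restriction to a generic fiber $F$ of $g$ is trivial. If I knew $g$ factors through $p$, then $p^*\rho|_{F}$ would be trivial (as $\rho$ comes from $X$), hence $\sigma|_F$ trivial, and running the fiber-by-fiber argument of Proposition \ref{propaux} and Lemma \ref{lemaux} for a component $F_x$ of a fiber of $p$ on which $g$ were nonconstant would give a contradiction exactly as before — so $(T)$ holds and Theorem \ref{thm2}'s converse produces a positive-dimensional $V\subseteq V^1(X)$ with $p^*V=\wV\ni p^*\rho$, forcing $\rho\in V$ and contradicting isolation. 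The only gap is whether $g$ factors through $p$; but the argument of Proposition \ref{propaux} shows precisely that any component $F$ of a $p$-fiber on which $g$ is nonconstant would carry a nontorsion pullback of a character in $\wV$ while simultaneously that pullback is trivial on $F$ (being pulled back from the point $p(F)\in X$ via a character coming from $X$) — contradiction. Hence $g$ is constant on $p$-fibers, factors through $p$, and the preceding reasoning applies.

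**The main obstacle** I expect is justifying rigorously that the factorization-through-$p$ argument (Proposition \ref{propaux}) still applies when we only know $p^*\rho$ is a \emph{point} of $\wV$ rather than knowing a priori that $\wV$ descends to $X$; the resolution is that the factorization argument only uses the existence of \emph{some} nontorsion character in $\wV$ whose pullback structure we control, which we do have since $p^*\rho$ is fixed under restriction to $p$-fibers. Once that is in place, everything reduces to the converse half of Theorem \ref{thm2} already proven, and the conclusion is that $p^*\rho$ cannot lie on any positive-dimensional component of $V^1(\wX)$, so by the structure of $V^1(\wX)$ it must be an isolated torsion point, whence $\rho=(p^*)^{-1}(p^*\rho)$ is torsion. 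A short alternative for the very last step: isolated points of $V^1(\wX)$ are torsion by \cite{A}, and $p^*$ is a morphism of algebraic groups with finite kernel, so torsion pulls back to torsion and, since $p^*$ is injective, $\rho$ itself is torsion.
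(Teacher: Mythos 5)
Your reduction has a genuine gap at its central step, and it is exactly the point the paper flags as open. You want to show that if $p^*\rho$ lies on a positive-dimensional component $\wV=\chi' g^*H^1(C,\C^*)$ of $V^1(\wX)$, then $g$ is constant on the fibres of $p$, so that $(T)$ holds and the converse half of Theorem \ref{thm2} forces a positive-dimensional component of $V^1(X)$ through $\rho$. But the factorization argument of Proposition \ref{propaux} and Lemma \ref{lemaux} needs a single character that simultaneously (a) restricts trivially to a component $F$ of a $p$-fibre and (b) has \emph{nontorsion} image in $H^1(C,\C^*)$ under the (finite-kernel) map $H^1(C,\C^*)\to H^1(F,\C^*)$. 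In the setting of Theorem \ref{thm2} this character exists because the component of $V^1(X)$ is positive-dimensional, so one may pick a nontorsion $\rho$ pulled back from $X$. Here no such character is available: the nontorsion elements $\sigma\in\wV$ are not known to restrict trivially to $p$-fibres (that is precisely condition $(T)$, which you are trying to establish), while $p^*\rho$, the only character known to be trivial on $p$-fibres, need not be nontorsion --- indeed, if the lemma is true it is torsion --- and in the decomposition $p^*\rho=\chi' g^*\tau_0$ the factor $\chi'$ is a character of $\wX$ not pulled back from $C$, whose restriction to $F$ you do not control, and $\tau_0$ may be torsion or trivial. So the sentence ``any component $F$ of a $p$-fibre on which $g$ is nonconstant would carry a nontorsion pullback of a character in $\wV$ while simultaneously that pullback is trivial on $F$'' conflates two different characters, and the argument is circular. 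The paper itself records in Remark \ref{rktorsion} that it is an open question whether $p^*L$ of an isolated point $L\in V^1_k(X)$ is isolated in a characteristic variety of $\wX$; your route would settle this, which is a strong sign the step cannot be carried out this way.

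The paper's proof is entirely different: it works on $X$ directly by Simpson's arithmetic method \cite{Si1}, extended by Budur--Wang \cite{BW} in the quasi-projective case. Using the exact sequence of Remark \ref{rk:leray}, one has $H^1(X,L)\cong\ker[H^1(\wX,L)\to H^1(E,L)]$, which allows one to define both a Betti version $V^1_B(X)$ and a de Rham version $V^1_{DR}(X)$ of the jump locus, each defined over $\bar\Q$ and matching under the analytic isomorphism $M_{DR}\to M_B$; isolated points are then torsion by \cite[Cor 3.5]{Si1} in the projective case, and by the Deligne comparison theorem plus Gelfond--Schneider in the quasi-projective case. Note also that your final appeal to \cite{A} for ``isolated points of $V^1(\wX)$ are torsion'' is misattributed: for smooth quasi-projective varieties that statement is itself a theorem of Simpson (projective case) and Budur--Wang, i.e.\ the very results the paper invokes, so even the target of your reduction rests on the same machinery.
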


\begin{proof}
We employ the method of Simpson \cite{Si1}, along 
with its extension
by Budur and Wang  \cite{BW}. We
start with the case where $X$ is projective.  
By a specialization
argument (\cite[p. 373]{Si1}) we can assume that $p:\wX\to X$ is defined over
$\bar \Q$.  Following Simpson, we introduce two spaces, the ``Betti'' space
$M_B(\wX)=Spec\,\Z[H_1(\wX,\Z)]$, and the ``de Rham'' space $M_{DR}(\wX)$ given as the
moduli space of line bundles $V$ on $\wX$ equipped with flat
connections $\nabla$.
Both of these spaces are defined over $\bar \Q$. Over $\C$, the map
\begin{equation}
  \label{eq:MDR}
\mu:M_{DR}(\wX)(\C)\to M_{B}(\wX)(\C)=H^1(\wX,\C^*)  
\end{equation}
which sends a pair $(V,\nabla)$ to its monodromy, is an analytic
isomorphism.
We can identify $V^1(X)$ with a subset of the right of \eqref{eq:MDR}, which we temporarily 
rename $V_B^1(X)$. Let $E\subset \wX$ denote the
exceptional divisor, which we assume to have normal crossings. Then for any local system $L$ on $X$, we have an isomorphism
$$H^1(X,L)\cong \ker[ H^1(\wX, L)\to H^1(E,L)]$$
obtained from Remark \ref{rk:leray}.
This  suggests the definition for the corresponding set $V_{DR}^1(X)\subset M_{DR}(\wX)$.
To $(V,\nabla)\in M_{DR}(\wX)$, we can associate its de Rham
cohomology
$$
H^i(\wX, (V,\nabla)) =\mathbb{H}^i(\wX,\Omega_{\wX}^\bullet\otimes
V,\nabla) = H^i(\R\Gamma(\Omega_{\wX}^\bullet\otimes
V,\nabla))
$$
$$
H^i(E, (V,\nabla)) =H^i(Tot^\bullet\R\Gamma(\Omega_{E_\bullet}^\bullet\otimes
V,\nabla))
$$
where $E_\bullet$ is the standard simplicial resolution of $E$ (as
given  for example in \cite[\S 4]{GS}).
Let
$V_{DR}^1(X)\subset M_{DR}(\wX)$  consist of those pairs $(V,\nabla)$
such that $(V,\nabla)|_E =(\OO_E,d)$ and 
\begin{equation}
  \label{eq:kerH1}
\ker [H^1(\wX, (V,\nabla))\to H^1(E, (V,\nabla))]\not=0  
\end{equation}
Both sets $V_{B}^1(X)$ and $V_{DR}^1(X)$ are defined over $\bar \Q$ and
correspond under \eqref{eq:MDR}. Therefore we may apply  \cite[Cor
3.5]{Si1} to conclude that an isolated point $S_B\subset V_B^1(X)$ is torsion.

When $X$ is only quasiprojective, we choose
a  normal compactification $Y\supset X$ and a
smooth projective compactification $\wY\supset \wX$ such that $D=
D_1+\ldots +D_n= \wY-\wX$ plus $E$ is a divisor
with normal crossings.  We can proceed as above, but now
\eqref{eq:MDR} is replaced by  the diagram 
$$
\xymatrix{
 M_{DR}(\wY)\ar[r]\ar[d]^{\mu} & M_{DR}(\wY/D)\ar[r]^{res}\ar[d]^{\mu} & \C^n\ar[d]^{e^{-2\pi i}} \\ 
 M_{B}(\wY)\ar[r] & M_B(\wX)\ar[r]^{ev} & (\C^*)^n
}
$$
of \cite{BW}, where $M_{DR}(\wY/D)$ is the moduli space of line bundles
on $\wY$ with connections with logarithmic singularities along
$D$. The map $res$ assigns the vector of residues to a connection, and
$ev$ evaluates a local system on loops around the $D_i$.
The sets  $V^1_B(X)\subset M_B(\wX)$ and $V^1_{DR}(X)\subset M_{DR}(\wY/D)$
can be defined  as above, where $(V,\nabla)|_E =(\OO_E,d)$ as before,
but  the second condition \eqref{eq:kerH1} is replaced by
$$\ker[\mathbb{H}^1(\wY,\Omega_{\wY}^\bullet (\log D)\otimes
V,\nabla)\to {H}^1(Tot^\bullet(\R\Gamma(\Omega_{E_\bullet}^\bullet (\log D)\otimes
V,\nabla)))]\not=0$$
Both $V^1_B(X)$ and $V^1_{DR}(X)$ are defined over $\bar
\Q$, in a way compatible with the evident $\bar\Q$ structures on
$\C^n$ and $(\C^*)^n$. Deligne's comparison theorem \cite[II, Thm
6.10]{De1} shows that $\mu(V^1_{DR}(X)-res^{-1}\N^n) = V_B^1(X)$. 
 Let $\tau\in V^1_B(X)$ be an
isolated point. Then it follows that it is defined over $\bar \Q$ and  is the image
of a $\bar \Q$ point of $V_{DR}^1(X)$.
Therefore the Gelfond-Schneider theorem
implies  that $\tau$ maps to an $N$-torsion
point under $ev$ for some $N\ge 1$. Therefore $N\tau\in
V^1_B(Y)=V_B^1(X)\cap M_B(\wY)$.
This is torsion by the previous case of the lemma.

\end{proof}

\begin{rk} \label{rktorsion} 
If $L$ is an isolated point in some characteristic variety $V^1_k(X)$, the following questions seem to be open.

\noindent (1) Is $p^*(L)$ an isolated point in some characteristic variety of $\wX$?

\noindent (2) What is the relation between $\dim H^1(X,L)$ and $\dim H^1(\wX,p^*L)$?

\end{rk} 

\section{Proof of Theorem  \ref{thm3} and of Corollary \ref{maincor2}}
The proof of Theorem  \ref{thm3} follows closely the proof of Theorem 5.8 in \cite{H3}, which itself gives a new proof of Corollary (10.3) in Morgan's paper \cite{Mo}. 

Though both of these results discuss only smooth varieties, the construction by Hain of a mixed Hodge structure (MHS for short) on the (rational) Malcev Lie algebra $\g=\g(X,x)$ associated to $\pi_1(X,x)$ for any complex algebraic variety in \cite{H1}, \cite{H2} allows us to proceed as follows.

By Remark \ref{rk1}, (iv) we know that $H^1(X)$ is a pure Hodge structure of weight 1 for $X$ normal projective (resp. has weights 1 and 2 for $X$ normal quasi projective), and hence by duality, the homology group $H_1(X)$ is a pure structure of weight $-1$ for $X$ projective (resp. has weights $-1$ and $-2$ for $X$ quasi projective).

There is a canonical homomorphism $H^\ast(\g) \to H^\ast(X)$ which is an
isomorphism in degree 1 and injective in degree 2. One can prove that it is a
morphism of MHS in all degrees. However, all we need to know here is that it is a
morphism in degrees $\le 2$. We prove the last statement because it is much simpler. That it is an isomorphism of
MHS in degree 1 is a direct consequence of \cite[Thm.~6.3.1(c)]{H1}.
That it is a morphism in degree 2 follows by the argument given in
\cite[Prop.~7.1]{H3}. In the current case, the argument, though
formally the same, is simpler as the coefficient module is the trivial local
system $\Q_X$.

Since $\g$ is topologically generated as a Lie algebra by the image of
any section of the surjection $\g \to H_1(X,\Q)$, it follows that $\g$ is a Hodge Lie algebra all of whose weights are negative. Then Proposition 5.2 in \cite{H3} implies that there is a canonical Lie algebra isomorphism
\begin{equation} \label{gr} 
\g_{\C}= \prod_{j\geq1}\gr^W_{-j}\g_{\C},
\end{equation}
and hence the first claim in Theorem 5.8 in \cite{H3} holds for $X$ normal.


Using now Corollary 5.7 in \cite{H3}, we get the following version of the second claim in Theorem 5.8 in \cite{H3}.

\begin{lem}\label{keylem}
 There is a morphism of graded vector spaces
$$\delta: \gr^W_*H_2(X,\C) \to \bL(\gr^W_*H_1(X,\C))$$          
such that
$$\gr^W_*\g_{\C}=\bL(\gr^W_*H_1(X,\C))/(\delta(\gr^W_*H_2(X,\C))).$$
Here $\bL(E)$ denotes the free Lie algebra spanned by a finite dimensional $\C$-vector space $E$, see Chapter 3 in \cite{ABC} for details.
\end{lem}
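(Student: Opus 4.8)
The plan is to feed the structure established just above—that $\g_\C$ is a Hodge Lie algebra, all of whose weights are negative, so that the natural map $\g_\C \to \gr^W_*\g_\C$ of \eqref{gr} is an isomorphism—into the general machinery of Corollary 5.7 in \cite{H3}. Recall that the canonical map $H^*(\g) \to H^*(X)$ is an isomorphism in degree $1$ and injective in degree $2$, and (as noted above) a morphism of MHS in degrees $\le 2$; dualizing, $H_1(\g) \cong H_1(X)$ and there is a surjection $H_2(\g) \surj{} \ker[\,?\,]$—more precisely an injection $H^2(\g) \hookrightarrow H^2(X)$, so a surjection $H_2(X) \surj{} H_2(\g)$, all compatible with weights. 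A presentation of a pronilpotent (here pro-graded) Lie algebra is controlled by $H_1$ (generators) and $H_2$ (relations): choosing a graded splitting of $\gr^W_*\g_\C \to \gr^W_* H_1(\g)_\C = \gr^W_* H_1(X)_\C$ and lifting to a free Lie algebra $\bL(\gr^W_* H_1(X)_\C) \surj{} \gr^W_*\g_\C$, the kernel is topologically generated, as an ideal, by a graded subspace isomorphic to $\gr^W_* H_2(\g)_\C$. This is exactly the content of Corollary 5.7 of \cite{H3}, and it produces the comparison map $\delta$ of graded vector spaces, now with source $\gr^W_* H_2(\g)_\C$.

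The remaining point is to replace $\gr^W_* H_2(\g)_\C$ by $\gr^W_* H_2(X,\C)$ in the statement. The surjection $H_2(X,\C) \surj{} H_2(\g)_\C$ induced by (the dual of) $H^*(\g)\to H^*(X)$ is a morphism of MHS, hence is strict for the weight filtration, so it remains surjective on each graded piece $\gr^W_*$. Composing $\delta$ with the induced surjection $\gr^W_* H_2(X,\C) \surj{} \gr^W_* H_2(\g)_\C$ gives a graded linear map $\gr^W_* H_2(X,\C) \to \bL(\gr^W_* H_1(X,\C))$ whose image generates the same Lie ideal (the image of a surjection generates what the target generates). Therefore the quotient presentation is unchanged:
$$\gr^W_*\g_\C = \bL(\gr^W_* H_1(X,\C)) / (\delta(\gr^W_* H_2(X,\C))).$$
Finally one reads off the degree bounds from the weight constraints recorded above: in the projective case $H_1(X)$ is pure of weight $-1$ and $H_2(X)$ carries weights $\ge -2$ (by the analogue of Remark \ref{rk1}(iv) for $H_2$, using $p^*$ injective and $\wX$ smooth projective so that $H^2(\wX)$ is pure of weight $2$)—actually all that is needed is that the relevant $\gr^W$ pieces of $H_1$ sit in degree $-1$ and those of $H_2$ in degrees $-2,-3,-4$, exactly as in \cite[Thm.~5.8]{H3}. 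In the quasi-projective case $H_1(X)$ has weights $-1,-2$, giving generators in degrees $-1,-2$; the relations then live in degrees $-2,-3,-4$ as in the smooth case.

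The main obstacle is the verification that the map $H^*(\g)\to H^*(X)$ is a morphism of MHS in degree $2$ for $X$ merely normal (not smooth). This is asserted in the excerpt to follow \cite[Prop.~7.1]{H3} "with the same argument, in fact simpler since the coefficient module is the trivial local system $\Q_X$"; I would check that the construction of the MHS on $\g$ in \cite{H1,H2} is natural enough that Hain's argument uses only the existence of a mixed Hodge complex computing $H^*(X,\Q_X)$ together with the bar/Chen construction, neither of which requires smoothness. Once that naturality is in hand, everything else is formal manipulation of strictness of MHS morphisms and the standard $H_1/H_2$ presentation of a graded Lie algebra, so no genuinely new difficulty arises beyond transcribing \cite[Cor.~5.7]{H3}.
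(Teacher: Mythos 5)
Your proposal is correct and follows essentially the same route as the paper: the paper's proof is precisely to feed the facts established just before the lemma (that $\g_\C$ is a Hodge Lie algebra with negative weights, and that $H^\ast(\g)\to H^\ast(X)$ is a morphism of MHS in degrees $\le 2$, an isomorphism in degree $1$ and injective in degree $2$) into Corollary 5.7 of \cite{H3}. Your additional remarks on replacing $\gr^W_\ast H_2(\g)_\C$ by $\gr^W_\ast H_2(X,\C)$ via strictness, and on the degree bounds, are consistent with (and partly anticipate) the discussion the paper carries out immediately after the lemma.
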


Let us study the image of $\delta$ in Lemma \ref{keylem} for $X$ normal. We know that $H_2(X,\C)$ has weights $0,-1,-2,-3$ and $-4$. 
The degree $2$ part of $\bL(\gr^W_*H_1(X,\C))$  has weights strictly less than $-1$ because it is a quotient of $\otimes^2 H_1(X,\C)$.
Therefore the graded pieces $\gr^W_0H_2(X,\C)$ and  $\gr^W_{-1}H_2(X,\C)$  are mapped to zero under $\delta$.
Hence the image of $\delta$ coincides with the image of $\bigoplus_{j=2,3,4}\gr^W_{-j}H_2(X,\C)$ which consists of elements of weight $-2$, $-3$ and $-4$ in $\bL(\gr^W_*H_1(X,\C))$. 

When $X$ is projective, it is enough to show that the image of $\delta$ consists of quadratic elements, i.e. elements in $\otimes^2 H_1(X,\C)$. Since $X$ is projective, we know that $H_2(X,\C)$ has weights $0,-1$ and $-2$. The graded pieces $\gr^W_0H_2(X,\C)$ and $\gr^W_{-1}H_2(X,\C)$
are mapped to zero under $\delta$ for the same reasons as above.
Hence the image of $\delta$ coincides with the image of $\gr^W_{-2}H_2(X,\C)$ which consists of elements of weight $-2$ in $\bL(\gr^W_*H_1(X,\C))=\bL(H_1(X,\C))$. Such elements are clearly contained in $\otimes^2 H_1(X,\C)$,
This completes our proof of Theorem  \ref{thm3}.

\bigskip

To prove Corollary \ref{maincor2},
recall  that the  (finitely presentable) fundamental group $\pi_1(X,x)$ is 1-formal if and only if its Malcev Lie algebra $\g_{\C}$ is isomorphic to the degree completion of a quadratically presented Lie algebra, see for instance Proposition 3.20 in \cite{ABC}.

The above proof yields the following result. For stronger or related  statements refer to Theorem 5 in \cite{H0} (for a proof see Theorem 12.8 in \cite{H-1}) showing the formality of $\Q$-manifolds, and Theorem 11 in \cite{Bis}, showing the formality of $V$-manifolds.

\begin{cor} \label{maincor3} 
The fundamental group $G=\pi_1(X)$ of a  projective variety $X$ which is a $\Q$-manifold is 1-formal.
\end{cor}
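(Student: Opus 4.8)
The plan is to rerun the proof of Theorem~\ref{thm3} in the projective case. The only place where normality of $X$ entered that proof was in guaranteeing that $H^1(X,\Q)$ is a pure Hodge structure of weight $1$; for a projective variety that is a $\Q$-homology manifold this purity is available for an independent reason, namely Poincar\'e duality over $\Q$. So first I would record the purity statement, and then the rest of the argument goes through word for word.

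The purity statement is: if $X$ is a connected projective variety of complex dimension $n$ which is a $\Q$-homology manifold, then $H^k(X,\Q)$ is pure of weight $k$ for every $k$; in particular $H_1(X,\Q)$ is pure of weight $-1$ and $H_2(X,\Q)$ is pure of weight $-2$. To see this, recall that a connected complex variety that is a $\Q$-homology manifold is irreducible and (being compact and canonically oriented) satisfies Poincar\'e duality with $\Q$-coefficients, so cup product gives a nondegenerate pairing of mixed Hodge structures
$$H^k(X,\Q)\otimes H^{2n-k}(X,\Q)\longrightarrow H^{2n}(X,\Q)=\Q(-n).$$
Since $X$ is complete, all weights on $H^j(X,\Q)$ are $\le j$, so the image of $W_{k-1}H^k(X,\Q)\otimes H^{2n-k}(X,\Q)$ under this pairing has weight $\le (k-1)+(2n-k)=2n-1$ and hence vanishes in the pure weight $2n$ space $\Q(-n)$. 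Nondegeneracy forces $W_{k-1}H^k(X,\Q)=0$, which together with $W_kH^k(X,\Q)=H^k(X,\Q)$ yields the purity. (If $X$ is allowed several connected components, apply this to the component carrying the base point.)

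With this in hand the argument of Theorem~\ref{thm3} applies verbatim. By Hain's construction the complex Malcev Lie algebra $\g=\g(X,x)$ of $\pi_1(X,x)$ carries a mixed Hodge structure all of whose weights are strictly negative, so by Proposition~5.2 of \cite{H3} one has the canonical isomorphism $\g_{\C}=\prod_{j\ge1}\gr^W_{-j}\g_{\C}$, the degree completion of its associated graded, and by Lemma~\ref{keylem}
$$\gr^W_*\g_{\C}=\bL(\gr^W_*H_1(X,\C))/(\delta(\gr^W_*H_2(X,\C))).$$
Purity of $H_1$ means $\gr^W_*H_1(X,\C)$ is concentrated in weight $-1$, so the degree~$d$ part of $\bL(\gr^W_*H_1(X,\C))$ sits in weight $-d$; the summands $\gr^W_0H_2$ and $\gr^W_{-1}H_2$ are mapped to $0$ under $\delta$ (here they even vanish, by purity of $H_2$), and $\delta(\gr^W_{-2}H_2(X,\C))$ lands in the weight $-2$, i.e.\ degree~$2$, part of the free Lie algebra. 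Hence $\gr^W_*\g_{\C}$ is quadratically presented, so $\g_{\C}$ is the degree completion of a quadratically presented Lie algebra, and therefore $\pi_1(X)$ is $1$-formal by Proposition~3.20 of \cite{ABC}. The only content here beyond the proof of Theorem~\ref{thm3} is the purity step, and the point needing care in it is that a $\Q$-homology manifold genuinely satisfies Poincar\'e duality over $\Q$ compatibly with the mixed Hodge structure on the complete variety $X$, so that $H^{2n}(X,\Q)\cong\Q(-n)$ and cup product is a morphism of mixed Hodge structures; granting that, the weight bound above is immediate.
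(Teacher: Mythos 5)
Your proof is correct and takes essentially the same route as the paper: both reduce the statement to the purity of the Hodge structure on $H^k(X,\Q)$ for a proper $\Q$-homology manifold and then observe that the proof of Corollary \ref{maincor2} (i.e.\ of Theorem \ref{thm3}) runs unchanged. The only difference is that the paper simply cites Deligne's Theorem 8.2.4 in \cite{HodgeIII} for this purity, whereas you reprove it via Poincar\'e duality and the weight bound $W_kH^k=H^k$ on complete varieties, which is in substance Deligne's own argument.
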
 
Indeed, if $X$ is a proper variety which is a $\Q$-manifold, then the Deligne MHS on $H^k(X,\Q)$ is in fact pure of weight $k$ for any $k$, see Theorem 8.2.4 in \cite{HodgeIII}. It follows that the above proof for Corollary \ref{maincor2} works in this new setting as well.

\begin{rk} \label{rk2} 
There are examples of smooth varieties $X$ such that $H^1(X,\Q)$ is a pure Hodge structure of weight 1 and $\pi_1(X,x)$ is not 1-formal, see for instance section 10 in \cite{DPS}.

\end{rk}  

 \section{Proof of Proposition \ref{prop:spPi} and of Theorems  \ref{thm:smoothing1} and  \ref{thm:smoothing2}}

First we prove Proposition \ref{prop:spPi}.
The map $\pi_1(X^*)\to \pi_1(X)$ is surjective by Theorem \ref{thm1}. The  image of $\pi_1( \partial \Delta)\to \pi_1(X)$ induced by $\sigma$ is trivial. Since we have a split extension $1\to \pi_1(X_t)\to \pi_1(X^*)\to \pi_1(\partial \Delta)\to 1$, the proposition follows.

The following result is needed in the  proof  of  Theorem  \ref{thm:smoothing1}, but we think it has an independent interest as well.
\begin{prop}\label{prop:b1normal}
  If $V$ is a normal projective variety, the first Betti number $b_1(V)$ of $V$ satisfies the equality
$$b_1(V)=2\dim H^1(V,\OO_V).$$
\end{prop}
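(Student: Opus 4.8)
The plan is to reduce the statement to the corresponding fact for a resolution of singularities $p:\widetilde V\to V$ (where it is classical Hodge theory) via the injection $p^*$ from Lemma~\ref{lemma:inject}. By Remark~\ref{rk1}(iv), the map $p^*:H^1(V,\Q)\to H^1(\widetilde V,\Q)$ is injective and identifies $H^1(V,\Q)$ with a sub-Hodge-structure of $H^1(\widetilde V,\Q)$; since $V$ is projective, $H^1(V,\Q)$ is pure of weight $1$, so $b_1(V)$ is even and $H^1(V,\C)=H^{1,0}\oplus H^{0,1}$ with $H^{0,1}=\overline{H^{1,0}}$, whence $b_1(V)=2\dim H^{0,1}(V)$. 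Thus it suffices to identify $H^{0,1}(V)$ with $H^1(V,\OO_V)$.

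The key step is to compare the truncated de Rham / Hodge complexes on $V$ and on $\widetilde V$. First I would note that $p_*\OO_{\widetilde V}=\OO_V$ by normality (Zariski's main theorem: the fibers of $p$ are connected and $V$ is normal), so $H^1(V,\OO_V)\hookrightarrow H^1(\widetilde V,\OO_{\widetilde V})$ via the Leray spectral sequence, exactly as in Lemma~\ref{lemma:inject} but with $L=\OO$; and this injection is compatible, under the Hodge decomposition on $\widetilde V$, with the inclusion $p^*:H^1(V,\C)\hookrightarrow H^1(\widetilde V,\C)$ composed with projection to $H^{0,1}(\widetilde V)=H^1(\widetilde V,\OO_{\widetilde V})$. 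Concretely, the composite $H^1(V,\OO_V)\to H^1(\widetilde V,\OO_{\widetilde V})=H^{0,1}(\widetilde V)$ lands in $p^*H^1(V,\C)\cap H^{0,1}(\widetilde V)=p^*(H^{0,1}(V))$ because $p^*$ is a morphism of Hodge structures. Counting: $\dim H^1(V,\OO_V)\le \dim H^{0,1}(\widetilde V)$ always, and one gets the two-sided bound by also using the conjugate/holomorphic-$1$-form side.

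For the reverse inequality I would show $H^1(V,\OO_V)\twoheadleftarrow$, or rather $\dim H^1(V,\OO_V)\ge \dim H^{0,1}(V)$, by exhibiting enough classes: a holomorphic $1$-form on $V$ (an element of $H^0(\widetilde V,\Omega^1_{\widetilde V})$ lying in $p^*H^1(V,\C)$) pairs with $H^1(V,\OO_V)$ via Serre-type duality on the singular projective variety, or more simply by using that the Hodge filtration $F^1H^1(V,\C)=H^{1,0}(V)$ has dimension $\tfrac12 b_1(V)$ and that $H^1(V,\C)/F^1H^1(V,\C)\cong H^1(V,\OO_V)$; this last isomorphism is the heart of the matter and follows from the degeneration at $E_1$ of the Hodge-to-de-Rham spectral sequence restricted to the weight-one part, which one can read off from the resolution: the complex $[\OO_V\to p_*\Omega^1_{\widetilde V}/(\text{exact})]$ computes $H^1(V,\C)$ in low degrees and is a complex of sheaves whose hypercohomology splits in the appropriate range. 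The main obstacle is precisely justifying that $H^1(V,\C)/F^1 \cong H^1(V,\OO_V)$ on a possibly singular normal $V$—i.e.\ that there is no extra contribution from the exceptional divisor in degree $\le 1$; this is handled by Remark~\ref{rk:leray}, which gives $H^1(V,\OO_V)=\ker[H^1(\widetilde V,\OO_{\widetilde V})\to H^1(E,\OO_E)]$ and $H^1(V,\C)=\ker[H^1(\widetilde V,\C)\to H^1(E,\C)]$, and these kernels are compatible with the Hodge filtrations (strictness of morphisms of MHS), so passing to $\gr$ of the weight filtration yields the desired identification and hence the equality $b_1(V)=2\dim H^1(V,\OO_V)$.
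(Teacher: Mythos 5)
There is a genuine gap. Your argument does deliver one of the two inequalities: since $p^*\colon H^1(V,\C)\to H^1(\widetilde V,\C)$ is an injective morphism of Hodge structures and $H^1(V)$ is pure of weight one, the composite $H^1(V,\C)\to H^1(V,\OO_V)\hookrightarrow H^1(\widetilde V,\OO_{\widetilde V})$ has kernel $F^1H^1(V,\C)$ of dimension $\tfrac12 b_1(V)$, so its image, which sits inside $H^1(V,\OO_V)$, shows $b_1(V)\le 2\dim H^1(V,\OO_V)$. The reverse inequality $\dim H^1(V,\OO_V)\le\tfrac12 b_1(V)$ is the real content, and your justification of it does not hold up. Remark \ref{rk:leray} concerns locally constant sheaves; the coherent analogue reads $H^1(V,\OO_V)=\ker[H^1(\widetilde V,\OO_{\widetilde V})\to H^0(V,R^1p_*\OO_{\widetilde V})]$, and $H^0(V,R^1p_*\OO_{\widetilde V})$ is not $H^1(E,\OO_E)$: by the theorem on formal functions it sees all infinitesimal neighbourhoods of the fibres (already for a cone over a curve of genus $g\ge 1$ its dimension exceeds $h^1(\OO_E)$). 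Similarly $H^1(V,\C)$ need not equal $\ker[H^1(\widetilde V,\C)\to H^1(E,\C)]$ when the singular locus is positive-dimensional. More fundamentally, there is no a priori Hodge or weight filtration on $H^1(V,\OO_V)$ for ``strictness of morphisms of MHS'' to act on, and the assertion that $[\OO_V\to p_*\Omega^1_{\widetilde V}/(\mathrm{exact})]$ computes $H^1(V,\C)$ with degenerating Hodge-to-de Rham spectral sequence is exactly a restatement of the identity $\gr_F^0H^1(V,\C)\cong H^1(V,\OO_V)$ you are trying to prove; this is a Du Bois--type statement and is not automatic for normal varieties.

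The paper closes this gap by a different and more elementary device: the projection formula together with $p_*\OO_{\widetilde V}=\OO_V$ gives an injection $\mathrm{Pic}^0(V)\hookrightarrow\mathrm{Pic}^0(\widetilde V)$, so $\mathrm{Pic}^0(V)$ is an abelian variety, hence compact; the exponential sequence then exhibits $H^1(V,\Z)$ as a cocompact discrete subgroup, i.e.\ a full lattice, in $H^1(V,\OO_V)$, which yields both inequalities at once. Some such global compactness input is indispensable: the purely local Leray comparison of $\C$ and $\OO_{\widetilde V}$ on the resolution cannot by itself rule out $H^1(V,\OO_V)$ being too large relative to $b_1(V)$. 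If you want to keep your Hodge-theoretic framing, you should replace the kernel comparison by this $\mathrm{Pic}^0$ argument (or by an honest appeal to the theory of mixed Hodge structures via Du Bois complexes, which is considerably heavier).
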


\begin{proof}
  Let $\pi:\tilde V\to V$ be a resolution of singularities. Then if
  $L$ is a line bundle on $V$, we see by the projection formula that
  $L= \pi_*\pi^*L = \pi_*\OO_{\tilde V}\otimes L = L$ since
  $\pi_*\OO_{\tilde V}=\OO_V$ by normality. Therefore
  $\pi^*:Pic^0(V)\to Pic^0(\tilde V)$ is injective. Consequently,
  $Pic^0(V)$ is an abelian variety. The exponential sequence
  $1\to \Z\to \OO_V\to \OO_V^*\to 1$ \cite[p 142]{GR}, implies
$$Pic^0(V) = \frac{H^1(V,\OO_V)}{H^1(V, \Z)}$$
It follows that $H^1(V, \Z)$ is a lattice in $H^1(V,\OO_V)$. This proves the proposition.
\end{proof}

Now we turn to the proof of Theorem \ref{thm:smoothing1}.
  Proposition \ref{prop:spPi} shows that the specialization map
  $H_1(X_t,\Z)\to H_1(X_0,\Z)$ is surjective. Let $K$ denote the
  kernel. Dualizing gives an exact sequence
  \begin{equation}
    \label{eq:H1X0}
0\to H^1(X_0,\Z)\to H^1(X_t,\Z)\to Hom(K,\Z)    
  \end{equation}
Thus the Betti numbers satisfy $b_1(X_0)\le  b_1(X_t)$.
Since all the above groups in \eqref{eq:H1X0} are  torsion free, to
show (1) it is  enough to prove that
the $b_1(X_0)=b_1(X_t)$.
To see this, note that $\dim H^1(X_t,\OO_{X_t})$ is upper semicontinuous
as function of $t$ \cite{Ha}. When combined with Proposition
\ref{prop:b1normal}, we get an inequality in the opposite direction
$b_1(X_0)\ge b_1(X_t)$. Therefore assertion (1) follows.
The image of the specialization map  clearly lies in the invariant part $H^1(X_t,\Z)^{T}$, where $T$ denotes local monodromy. 
This implies (2). (3)
is consequence of (1) and proposition \ref{prop:spPi}.

We now turn to the last assertion (4). Given a rank one local system
$L$ on $X_0$, we can view it as a local system on $X$, and therefore
on $X_t$ by restriction. Proposition \ref{prop:spPi} implies that the map
$$H^1(X_0,L)\to H^1(X_t,L)$$
is injective because it can be identified with an edge map for Hochschild-Serre spectral sequence, see for instance \cite{Brown}, p. 171 (where the homology case is treated and hence an epimorphism is obtained). One has to remark that  the character $\rho_L$ associated to $L$
can be regarded as being defined on $H_1(X_0,\Z)$, has a lift to $H_1(X_t,\Z)$ and hence has a trivial restriction to the kernel $K$.

This immediately gives an inclusion $V^1(X_0)\subseteq
V^1(X_t)$.  We need the reverse inclusion. By
Beauville's structure  theorem \cite{B},  the set $V^1(X_t)$ is
Zariski closed, and torsion points are Zariski dense in each
component. Thus we may suppose that $L\in V^1(X_t)\cap H^1(X_0,\C^*)$ is $n$-torsion
and then show that $L\in V^1(X_0)$.  
 By Kummer theory, we may choose an \'etale cover $\tilde X=
 Specan(\bigoplus_{i=0}^{n-1} L^{\otimes i})\stackrel{p}{\to} X$ such
 that $p^*L$ is trivial \cite[\S 17]{bpv}. We have
 $$H^1(\tilde X, \C) = \bigoplus H^1(X, L^{\otimes i})$$
 We can pick out the individual summands on the right by decomposing
 the left side into irreducibles under the action of the  Galois group
 $\Z/n\Z$ of the covering. By applying (1) to $\tilde X\to \Delta$, we find that 
$H^1(\tilde X_0,\C)\cong H^1(\tilde X_t, \C)$. Since restriction is clearly $\Z/n\Z$ equivariant, we obtain
$H^1(X_0, L)\cong H^1(X_t, L)$. Therefore $L\in V^1(X_0)$.

\medskip
Now we pass to the proof of Theorem \ref{thm:smoothing2}.
By Theorem~\ref{thm2}, we can find torsion
characters $\chi_i$ and maps to curves $\pi_{i,0}':X_0\to C_{i,0}'$
such that 
$$V^1(X_0)=\bigcup_{i=1}^n
\chi_i\,{\pi_{i,0}'}^*H^1(C_{i,0}',\C^*)\cup
\bigcup_{i=n+1}^N\{\chi_i\}$$
Let  $V_i= {\pi_{i,0}'}^*H^1(C_{i,0}',\C^*)$  and  let $V_{i,t}\subset
H^1(X_t,\C^*)$ denote its image
under specialization. 
By theorem~\ref{thm:smoothing1}, we have 
$$V^1(X_t)\cap H^1(X_0,\C^*) =\bigcup_{i=1}^n
\chi_iV_{i,t}\cup \bigcup_{i=n+1}^N\{\chi_i\}$$
for $t\not=0$.
By applying  theorem~\ref{thm2} to $X_t$, we see that 
$V_{i,t}$ is necessarily the preimage of a map to a curve
$\pi_{i,t}:X_t\to C_{i,t}$. The remaining issue is to show that these
curves $C_{i,t}$ and the
maps $\pi_{i,t}$  fit together into a family, and this extends to $t=0$. It is convenient to first
linearize these objects by setting
\begin{equation}
  \label{eq:U}
  U_{i,t}=\pi_{i,t}^*H^1(C_{i,t}',\Z)
\end{equation}
This is a sub Hodge structure of $H^1(X_t,\Z)$. 
This determines an abelian variety
$$A_{i,t} = \frac{U_{i,t,}^*\otimes \C}{U_{i,t}^*+F^{-1}U_{i,t,}^*\otimes \C}$$
which is a quotient of  the Albanese $Alb(X_t)$. We have a map
$a_t:X_t\to A_{i,t}$ given by composing the Albanese map $X_t\to
Alb(X_t)$, determined by the base point $\sigma(t)$, with the quotient
map. We recover the curve  $C_{i,t}$ from $U_{i,t}$ by taking the
$im(a_t)$.

We can characterize the complexification $U_{i,t,\C}$  as the connected
component of $\exp^{-1}V_{i,t}$  containing $0$, where
$\exp:H^1(X,\C)\to H^1(X,\C^*)$ is  the exponential map. This also
determines the lattice via $U_{i,t}=U_{i,t,\C}\cap H^1(X_t,\Z)$.  This
new characterization shows that $U_{i,t}$ is the image of $U_{i,0}$
under specialization.

Viewing local systems as
bundles, we can see that
$$H^1(X_0,\Z)\times \Delta^*\cong \bigcup_{t\in \Delta^*} H^1(X_t,\Z) =R^1f_*\Z|_{\Delta^*}
$$
is constant, where the isomorphism is given by specialization.
The previous discussion shows that 
$$\mathcal{U}_i= \bigcup_{t\not=0} U_{i,t}\subset \bigcup H^1(X_t,\Z)$$ 
forms a sublocal system. 
 Since  the fibres are sub Hodge
structures of $H^1(X_t,\Z)$, it follows
that $\mathcal{U}_i$ is a sub-variation of Hodge structure of
$=R^1f_*\Z|_{\Delta^*}$, which we denote by $\mathcal{U}_i$.  We can now construct a
bundle 
$$A_i= \frac{\mathcal{U}_{i,\C}^*}{\mathcal{U}_i^*+F^{-1}\mathcal{U}_{i,\C}^*}$$
of Abelian varieties over $\Delta$.
When restricted to $\Delta^*$, this is a 
quotient of the relative Albanese $Alb(X^*/\Delta^*)$. Let $a:X^*\to
Alb(X^*/\Delta^*)\to A_i$ be the composite, where the
first map is a relative version of the Albanese map. We can see that
the 
fibre over $t$ is precisely $a_t:X_t\to A_{i,t}$ constructed above. 
Let $C_i^* = image(a)\subset A_i|_{\Delta^*}$. 
Then the fibres of $C_i^*$
are exactly the $C_{i,t}, t\in \Delta^*$. 

To complete  the proof, we have to show that the maps $\pi_i^*:X_i^*\to C_i^*$
constructed above extend. Since the monodromy of the family 
$C_i^*$ is trivial, and in particular unipotent, we can conclude first
of all that this has an extension $C_i$ to a stable curve over $\Delta$,
and secondly that $C_i$ is in fact smooth over $0$. Now let
$\Gamma\subset X\times C_i$ denote the closure of the graph of
$\pi_i^*$. Let $p:\Gamma\to X$ and $q:\Gamma\to C_i$ denote the projections. Given $x_0\in X_0$,
we claim that $p^{-1}(x_0)$ maps to a point  in $C_i$ under $q$.  Suppose not,
then arguing exactly as in the
proof of proposition \ref{propaux}, we would see that the induced map
$r:H^1(C_{i,0},\C)\to H^1(p^{-1}(x_0),\C)$ would have to be nonzero.
We can choose an analytic arc passing
through $x_0$ and not contained in $X_0$.
 After normalizing  and shrinking the arc, we obtain a holomorphic function $g:\Delta\to X$ such that $g(0)=x_0$
and  $g\circ f:\Delta\to \Delta$ is a finite cover ramified at
$0$. Let $t$ be nonzero and sufficiently small, then we have a diagram
$$
\xymatrix{
 H^1(C_0,\C)\ar[r]^{r}\ar[d]^{\cong} & H^1(p^{-1}(x_0),\C)\ar[d] \\ 
 H^1(C_t,\C)\ar[r] & H^1(p^{-1}((g\circ f)^{-1}(t)),\C)=0
}
$$
This shows that the map  $r=0$.  So the claim is proved, and it implies
that $\Gamma$ is the graph of a map $\pi_i:X\to C_i$ extending
$\pi_i^*$.  By chasing the diagram
$$\xymatrix{
 H^1(C_{i,0},\C^*)\ar[r]\ar[d]^{\cong} & H^1(X_0,\C^*)\ar[d] \\ 
 H^1(C_{i,t},\C^*)\ar[r] & H^1(X_t,\C^*)
}
$$
We can see that $\pi_{i,0}^*H^1(C_{i,0},\C^*)=V_i$, and this completes
the proof.


\end{document}